\documentclass[12pt,a4paper,reqno]{amsart}

\usepackage{amsfonts}
\usepackage{amsmath,amssymb,amsthm,amsxtra}
\usepackage[utf8]{inputenc}
\usepackage[T1]{fontenc}
\usepackage[dvipsnames]{xcolor}
\usepackage{graphicx}
\usepackage{tikz-cd}
\usepackage{tabularx}
\usepackage{float}
\usepackage[colorlinks, linkcolor = MidnightBlue, anchorcolor = Periwinkle,
citecolor = purple, urlcolor = Emerald]{hyperref}
\usepackage{enumitem}
\usepackage{fullpage}
\usepackage{url}

\usepackage{setspace}
\usepackage{extarrows}
\usepackage{cleveref}
\usepackage{mathtools}

\theoremstyle{plain}
\newtheorem{theorem}{Theorem}[section]

\newtheorem{proposition}[theorem]{Proposition}
\newtheorem{lemma}[theorem]{Lemma}
\newtheorem{corollary}[theorem]{Corollary}

\theoremstyle{definition}
\newtheorem{definition}[theorem]{Definition}

\newtheorem{remark}[theorem]{Remark}

\numberwithin{equation}{section}

\DeclareMathOperator{\Hilb}{Hilb}

\DeclareMathOperator{\End}{End}
\DeclareMathOperator{\ParEnd}{ParEnd}
\DeclareMathOperator{\SParEnd}{SParEnd}
\DeclareMathOperator{\Ext}{Ext}
\DeclareMathOperator{\Hom}{Hom}
\DeclareMathOperator{\Spec}{Spec}

\DeclareMathOperator{\Aut}{Aut}
\DeclareMathOperator{\tr}{tr}
\DeclareMathOperator{\Pic}{Pic}
\DeclareMathOperator{\diag}{diag}

\DeclareMathOperator{\pardeg}{pardeg}
\DeclareMathOperator{\rk}{rk}

\newcommand{\mbC}{\mathbb{C}}

\newcommand{\mbA}{\mathbb{A}}
\newcommand{\mbP}{\mathbb{P}}
\newcommand{\mcM}{\mathcal{M}}

\newcommand{\mcO}{\mathcal{O}}

\newcommand{\mcK}{\mathcal{K}}

\newcommand{\mcV}{\mathcal{V}}

\newcommand{\wt}{\widetilde}

\title{On the Symplectic forms of Groechenig's Higgs moduli over an elliptic curve}

\date{\today}

\author{Zelin Jia}
\address{Graduate School of Mathematics\\ Nagoya University\\Furocho, Chikusa-ku\\ Nagoya\\464-8602\\ Japan.}
\email{zelin.jia.c0@math.nagoya-u.ac.jp}

\begin{document}

\begin{abstract}
Gorsky, Nekrasov, and Rubtsov introduced the moduli space of marked Higgs bundles over an elliptic curve $E$
and identified it with the Hilbert scheme of points on the cotangent bundle.
Later, Groechenig constructed four analogous isomorphisms on marked rational curves, 
of affine Dynkin types $\wt D_4$, $\wt E_6$, $\wt E_7$, and $\wt E_8$, 
for the $\Gamma$-Hilbert schemes of $T^*E$ for cyclic groups $\Gamma$ with $|\Gamma|\in\{2,3,4,6\}$.
We prove that the isomorphisms in all five cases are holomorphic symplectomorphisms. 
\end{abstract}

\maketitle
\tableofcontents

\section{Introduction}

Let $E$ be a complex elliptic curve with chosen origin $o$, 
and let $\Gamma\subset \Aut(E,o)$ be a finite cyclic group such that
\[
        |\Gamma|\in\{1,2,3,4,6\}.
\]
Let us denote
\[
        C_\Gamma=
        \begin{cases}
        E,&|\Gamma|=1,\\
        E/\Gamma\simeq\mbP^1,&|\Gamma|>1,
        \end{cases}
        \qquad
        p_\Gamma:E\longrightarrow C_\Gamma,
\]
where $p_\Gamma$ is the identity map for $|\Gamma|=1$ and the quotient map for the other four cases,
and we write
\[
        0=p_\Gamma(o)\in C_\Gamma.
\]
For $|\Gamma|=2$ the group is generated by involutions on an arbitrary elliptic curve.  
For $|\Gamma|=4$ one uses the elliptic curve with Gaussian complex multiplication, 
and for $|\Gamma|\in\{3,6\}$ one uses the elliptic curve with Eisenstein complex multiplication.  
In the coordinate centered at $o$,
we can write the action as $\gamma \cdot z=\zeta_\gamma z$, 
and we define the induced action on the cotangent bundle $T^*E$ by
\[
 \gamma\cdot\bigl(z,\xi\,dz|_z\bigr)
 =\bigl(\zeta_\gamma z,
        \zeta_\gamma^{-1}\xi\,dz|_{\zeta_\gamma z}\bigr);
\]
For simplicity, 
we write $\gamma\cdot(z,\xi)=(\zeta_\gamma z,\zeta_\gamma^{-1}\xi)$.
Thus the quotient
\[
        X_\Gamma:=T^*E/\Gamma
\]
is $T^*E$ itself for $|\Gamma|=1$ and is a normal surface with rational double points for $|\Gamma|>1$.  
Therefore, the $\Gamma$-Hilbert scheme
\[
        S_\Gamma:=\Gamma\text{-}\Hilb(T^*E)
\]
equals $T^*E$ for $|\Gamma|=1$ and is the crepant resolution of $X_\Gamma$ for $|\Gamma|>1$.

For $|\Gamma|=1$ and every $m\geq1$,
Gorsky, Nekrasov and Rubtsov \cite{GNR} gives an isomorphism from $(T^*E)^{[m]}$ to the $\wt A_0$-type moduli space of marked, 
equivalently parabolic, Higgs bundles on $E$.
For $|\Gamma|>1$,
Groechenig \cite{Groechenig} proved that $S_\Gamma$ is isomorphic to a two-dimensional moduli space of stable parabolic Higgs bundles on a marked rational curve,
where the isomorphism is given by the relative Fourier--Mukai transform.
The four possible non-trivial cyclic groups correspond to the affine Dynkin diagrams
\[
        \wt D_4,\qquad \wt E_6,\qquad \wt E_7,\qquad \wt E_8.
\]
He further proved that for every $m\geq 1$ the Hilbert scheme $S_\Gamma^{[m]}$ is again a moduli space of stable parabolic Higgs bundles.  

The purpose of this paper is to prove the symplectomorphism statement for the above five cases.

\begin{theorem}[Main theorem]\label{thm:main}
  Let $\Gamma\subset \Aut(E,o)$ be a finite cyclic group with
  $|\Gamma|\in\{1,2,3,4,6\}$ and let
    \[
        \Phi_{\Gamma,m}:S_\Gamma^{[m]}\xrightarrow{\sim}\mcM_{\Gamma,m}
    \]
  be the isomorphism from the Hilbert scheme of $m$ points on $S_\Gamma=\Gamma\text{-}\Hilb(T^*E)$ to the corresponding stable parabolic Higgs moduli space on $C_\Gamma$
  constructed by Groechenig \cite{Groechenig}.  
  Then we have
    \[
        \Phi_{\Gamma,m}^*\Omega_{\mcM_{\Gamma,m}}=\omega_{S_\Gamma^{[m]}},
    \]
  where $\Omega_{\mcM_{\Gamma,m}}$ is the symplectic form on the moduli space of stable parabolic Higgs bundles constructed by deformation theory, 
  and $\omega_{S_\Gamma^{[m]}}$ is the holomorphic symplectic form induced by the symplectic form on $S_\Gamma$.
\end{theorem}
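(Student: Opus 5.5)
We will prove the identity by reducing it to an open dense locus where both forms can be computed explicitly. Both $\Phi_{\Gamma,m}^*\Omega_{\mcM_{\Gamma,m}}$ and $\omega_{S_\Gamma^{[m]}}$ are holomorphic $2$-forms on the smooth connected variety $S_\Gamma^{[m]}$, so it suffices to show that they agree on a dense open subset. We take the locus $U\subset S_\Gamma^{[m]}$ of reduced subschemes $\{x_1,\dots,x_m\}$ of distinct points, each lying in the open part of $S_\Gamma$ over the free locus of the $\Gamma$-action on $T^*E$ away from the exceptional divisors. The points must also lie over generic spectral data, meaning distinct eigenvalues, not over the marked points, and pairwise distinct images in $C_\Gamma$. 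On $U$, the form $\omega_{S_\Gamma^{[m]}}$ is $\sum_i \mathrm{pr}_i^*\omega_{S_\Gamma}$, and there $\omega_{S_\Gamma}$ is the descent of $dz\wedge d\xi$ (or $d\xi\wedge dz$, depending on convention) from $T^*E$.

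The key step is to compute $\Omega_{\mcM_{\Gamma,m}}$ on $\Phi_{\Gamma,m}(U)$ in Darboux-type spectral coordinates. Recall the deformation-theoretic description: the tangent space at a parabolic Higgs bundle $(V,\theta)$ is $\mathbb{H}^1$ of the complex $\ParEnd(V)\xrightarrow{[\cdot,\theta]}\SParEnd(V)\otimes K_{C_\Gamma}(D)$, and $\Omega$ is the Serre-duality pairing $\tr$ of cocycles. Groechenig's Fourier--Mukai construction identifies the Higgs bundle with the pushforward of a sheaf on a spectral curve in a twisted cotangent surface. Under this identification, generic points of $S_\Gamma^{[m]}$ correspond to Higgs fields which, in a local frame near the image points $c_i=p_\Gamma(x_i)$, have elementary modifications or apparent singularities encoded by the pairs $(q_i,p_i)$, where $q_i$ is a coordinate of $c_i$ on $C_\Gamma$ and $p_i$ is the corresponding eigenvalue of $\theta$. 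We will then show, using a Čech computation with a cover by small discs around the $c_i$ and a complement, that $\Omega=\sum_i dp_i\wedge dq_i$. Deformations of $q_i$ are represented by a cocycle of gauge transformations on the punctured disc, deformations of $p_i$ by a local change of $\theta$, and the residue pairing yields exactly $dp_i\wedge dq_i$, as in the Hurtubise and Krichever style computations of spectral Darboux coordinates. The additive structure over $i$ follows because the cocycles for different $i$ have disjoint supports.

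Finally, we compare these coordinates with $(z,\xi)$. The pairs $(q_i,p_i)$ are pulled back from $(z,\xi)$ via $p_\Gamma$ and the induced map on cotangent fibers: $q=z^{|\Gamma|}$ (or the Weierstrass $\wp$-type invariant for $|\Gamma|=2$) and $p=\xi/q'(z)$ up to the normalization of the twist by $K(D)$. Then $dp\wedge dq=d\xi\wedge dz$ locally, so the two forms agree on $U$ and hence everywhere. This also covers $|\Gamma|=1$, where $q=z$ and $p=\xi$.

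The main obstacle is the middle step: tracking Groechenig's relative Fourier--Mukai transform explicitly enough to identify the spectral coordinates. The difficulty lies in checking that the normalizations, namely the twisting line bundle, the parabolic weights, and the choice of $K_{C_\Gamma}(D)$ versus $T^*E/\Gamma$, produce exactly $p=\xi/q'(z)$ with no constant factor or extra exact term. First we will verify this for $m=1$, where $\mcM_{\Gamma,1}\cong S_\Gamma$ is a surface and it suffices to compare the two forms at one generic point, since holomorphic $2$-forms on $S_\Gamma$ are unique up to scalar. The general-$m$ statement then follows by the locality and additivity argument above.
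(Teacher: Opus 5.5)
Your overall plan is sound: restrict to the reduced generic locus, compute the deformation-theoretic pairing there by residues, and use the fact that the cotangent lift of the local biholomorphism $p_\Gamma$ near a free point preserves the canonical form. Working in étale charts on $C_\Gamma$, instead of pulling back to $E$ through the orbifold correspondence and averaging with the factor $\frac{1}{|\Gamma|}$ as the paper does, would be a legitimate alternative. The normalisation issue you single out as the main obstacle is in fact harmless. Away from $D$ one has $K_C(D)=K_C$, and the eigenvalue of $\theta$ on the sheet through $x_i$ is just the transported form $\xi_i\,dz$, so $p\,dq=\xi\,dz$ holds tautologically.

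The real gap is that the step carrying all the content, $\Omega=\sum_i dp_i\wedge dq_i$, is only asserted, and the mechanism you sketch for it fails. A first-order change of $p_i$ cannot be represented by a change of $\theta$ supported near $c_i$. For a cocycle $(\alpha_{01},\beta_0,\beta_1)$ one has $\beta_1-\beta_0=[\theta,\alpha_{01}]$, and this has zero diagonal part in the local eigenframe. So a diagonal $\beta_0$ on the discs cannot be matched with $\beta_1=0$ on the complement. Equivalently, a compactly supported $\beta$ does not change the characteristic polynomial, whereas $\partial_{p_i}$ changes $t_i=\xi_i^{|\Gamma|}$. The correct representative is the global form $\dot\xi_i\,dz$ on the $i$-th summand, so additivity cannot come from ``disjoint supports''.

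What you need instead is the input the paper uses. On the generic locus $\Phi(Z)$ splits as $\bigoplus_i(L_{x_i},\xi_i\,dz)$ on $E$, respectively $\bigoplus_i p_{\Gamma*}(L_{x_i},\xi_i\,dz)$ on $C_\Gamma$. The off-diagonal blocks of the deformation complex are acyclic. Hence $d\Phi$ sends $\partial_{x_i},\partial_{\xi_i}$ to $d\mathrm{AJ}_{x_i}(\dot x_i)\in H^1(E,\mathcal{O}_E)$ and $\dot\xi_i\,dz\in H^0(E,K_E)$ in the $i$-th block. The trace pairing is then block-orthogonal, and within a block it is the residue pairing $\langle d\mathrm{AJ}_x(\dot x),\eta\rangle=\eta_x(\dot x)$. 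The Hurtubise--Krichever computations you allude to assume smooth spectral curves. Here the components of the spectral curve all meet over $D$, so those computations cannot simply be imported.

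Second, you never address the parabolic data at $D$, in particular, for $m\geq 2$, the line $\ell$ (the ``$1$'' in the flag at $0$). Moving $x_i$ also moves $v_i\in(L_{x_i})_o$ and hence $\ell$. So $d\Phi(\partial_{q_i})$ is a priori not a cocycle supported near $c_i$, and you must show that such flag motions are trivial in $\mathbb{H}^1(\mathcal{K}^\bullet_{\mathrm{par}})$. This is exactly the paper's comparison through $\mathcal{D}^\bullet$:
\begin{itemize}
\item the map $\epsilon_o$ onto $A_o=\Hom(\ell,V_o/\ell)$ is surjective, because the automorphism torus of the split Higgs bundle acts transitively on admissible marked lines;
\item $\partial_B$ is injective;
\item together these give $\mathbb{H}^1(\mathcal{K}^\bullet_{\mathrm{par}})\simeq\mathbb{H}^1(\mathcal{K}^\bullet_{\mathrm{ss}})$.
\end{itemize}
For the flags at the branch points, you also need that pushing forward along $p_\Gamma$ maps $[\mathcal{O}_E\xrightarrow{0}K_E]$ into the parabolic complex and is compatible with the trace maps to $\mathbb{C}$. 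The paper gets both from the orbifold--parabolic lemma.

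Finally, regular $2$-forms on $S_\Gamma$ are not unique up to scalar. They are $g\,\omega_{S_\Gamma}$ with $g\in\mathcal{O}(S_\Gamma)=\mathbb{C}[\xi^{|\Gamma|}]$. Your one-point shortcut for $m=1$ is valid only because both forms are algebraic and nondegenerate, so their ratio is a unit of $\mathbb{C}[\xi^{|\Gamma|}]$, hence constant.
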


We prove the theorem by an explicit calculation of the symplectic forms on the reduced spectral locus 
and then extend the equivalence globally.

As a consequence, as in Section~\ref{sec:hitchin-maps},
we can construct an algebraic integrable system as Hitchin system on the Hilbert scheme of points:
\[
 h_{\Gamma,m}:S_\Gamma^{[m]}\longrightarrow\mbA^m.
\]

\subsection{The parabolic data}

We use the following explicit parabolic descriptions.
In every row, the Higgs field has at most order one poles along the marked divisor, 
and its residues are nilpotent with respect to the listed flags.
In the $\wt A_0$ case, the extra one-dimensional subspace is inserted as ``$1$'' in the parabolic flag over $o\in E$.  
In each nontrivial quotient case,
it is inserted as ``$1$'' in the parabolic flag over the marked point $0=p_\Gamma(o)\in C_\Gamma$.

\begin{table}[H]
\centering
\small
\renewcommand{\arraystretch}{1.24}
\setlength{\tabcolsep}{4pt}
\begin{tabularx}{\textwidth}{c|c|c|c|X}
\textup{type} & $|\Gamma|$ & \textup{coarse curve} & \textup{rank} & \textup{flag dimensions at marked points} \\
\hline
$\wt A_0$ & $1$ & $E$ & $m$ & $(m,1,0)$ at the origin $o$ of $E$ \\
$\wt D_4$ & $2$ & $\mbP^1$ & $2m$ & \begin{tabular}[t]{@{}l@{}}$(2m,m,0)$, $(2m,m,0)$,\\ $(2m,m,0)$, $(2m,m,1,0)$\end{tabular} \\
$\wt E_6$ & $3$ & $\mbP^1$ & $3m$ & \begin{tabular}[t]{@{}l@{}}$(3m,2m,m,0)$, $(3m,2m,m,0)$,\\ $(3m,2m,m,1,0)$\end{tabular} \\
$\wt E_7$ & $4$ & $\mbP^1$ & $4m$ & \begin{tabular}[t]{@{}l@{}}$(4m,2m,0)$, $(4m,3m,2m,m,0)$,\\ $(4m,3m,2m,m,1,0)$\end{tabular} \\
$\wt E_8$ & $6$ & $\mbP^1$ & $6m$ & \begin{tabular}[t]{@{}l@{}}$(6m,3m,0)$, $(6m,4m,2m,0)$,\\ $(6m,5m,4m,3m,2m,m,1,0)$\end{tabular}
\end{tabularx}
\caption[The five Hilbert-scheme/parabolic-Higgs families]{The five Hilbert-scheme/parabolic-Higgs families.}
\end{table}
The divisors, the corresponding flags and the stability conditions are described explicitly in Section~\ref{sec:parabolic_form}.

\section{The parabolic Higgs bundles}

In this section,  following \cite{Groechenig}, we first describe the
quotient surface $T^*E/\Gamma$ and its crepant resolution $S_\Gamma$.  We then
specify the corresponding stable strongly parabolic Higgs bundles, including their
flags, weights, and stability conditions.

Finally, we will recall the deformation complex 
and the resulting holomorphic symplectic form on their moduli spaces.

\subsection{Quotient surfaces and the unique crepant resolution}

We let $z$ be a local coordinate on $E$
and let $\xi$ be the dual cotangent coordinate, 
so that a cotangent vector is written as $\xi\,dz$.
Then we can write the tautological one-form and symplectic form on $T^*E$ as
\[
        \lambda_E=\xi\,dz,
        \qquad
        \omega_E=d\lambda_E=d\xi\wedge dz.
\]
If $\gamma\in\Gamma$ acts on $E$ by $z\mapsto \zeta z$ in a local coordinate around a fixed point, 
then the induced action on the cotangent bundle acts by
\[
        (z,\xi)\longmapsto (\zeta z,\zeta^{-1}\xi).
\]
Which implies that
\[
        (\zeta^{-1}\xi)\,d(\zeta z)=\xi\,dz,
\]
thus we have $\gamma^*\lambda_E=\lambda_E$ and $\gamma^*\omega_E=\omega_E$.

We let
\[
        q_\Gamma:T^*E\longrightarrow X_\Gamma:=T^*E/\Gamma
\]
be the quotient map.
Denote by $(T^*E)^{\circ}$ the open locus with trivial stabilizer, 
i.e. the surface minus the torsion points.
Denote $X_\Gamma^{\circ}:=(T^*E)^{\circ}/\Gamma$,
therefore, the symplectic form $\omega_E$ descends uniquely to a holomorphic symplectic form 
$\omega_{X_\Gamma^{\circ}}$ on $X_\Gamma^{\circ}$ satisfying
$q_\Gamma^*\omega_{X_\Gamma^{\circ}}=\omega_E$.

\begin{lemma}
The singularities of $X_\Gamma$ are rational double points.  More precisely, at a point whose stabilizer has order $a$, the singularity is of type $A_{a-1}$.
\end{lemma}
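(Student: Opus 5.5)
The plan is to reduce the statement to a local computation at fixed points, using that $\Gamma$ acts freely away from finitely many points. The singular points of $X_\Gamma$ are the images of points of $T^*E$ with nontrivial stabilizer. A point $(z,\xi)$ is fixed by $\gamma\neq 1$ if and only if $\zeta_\gamma z=z$ on $E$ and $\zeta_\gamma^{-1}\xi=\xi$ in the fibre. Since $\zeta_\gamma\neq1$, the second condition forces $\xi=0$. The first condition says that $z$ is a fixed point of $\gamma$ on $E$, which is a torsion point. Hence the non-free locus is a finite set of points on the zero section, and $X_\Gamma$ is smooth away from their images.

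Now fix such a point $P=(z_0,0)$ and let $\Gamma_P\subset\Gamma$ be its stabilizer. This subgroup is cyclic of some order $a$, generated by $\gamma$ with $\zeta_\gamma=\zeta$ a primitive $a$-th root of unity. Choose the local coordinate $w$ on $E$ centred at $z_0$, obtained from the group translation, in which $\gamma$ acts linearly by $w\mapsto \zeta w$. Such a coordinate exists because $\gamma$ fixes $z_0$ and acts on $E$ as $z\mapsto \zeta z$ in the flat coordinate. Explicitly, $\gamma(z_0+w)=\zeta z_0+\zeta w\equiv z_0+\zeta w$ modulo the lattice, using $\zeta z_0\equiv z_0$. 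With the dual fibre coordinate $\eta$, the induced action on $T^*E$ near $P$ is
\[
(w,\eta)\longmapsto(\zeta w,\zeta^{-1}\eta).
\]
This action is linear and lies in $SL_2(\mbC)$.

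Choose a $\Gamma_P$-invariant neighbourhood $U$ of $P$ whose translates by $\Gamma/\Gamma_P$ are pairwise disjoint. Then a neighbourhood of $q_\Gamma(P)$ in $X_\Gamma$ is biholomorphic to $U/\Gamma_P$, and hence analytically isomorphic to a neighbourhood of the origin in $\mbC^2/\mu_a$ with weights $(1,-1)$. The invariant ring $\mbC[w,\eta]^{\mu_a}$ is generated by $x=w^a$, $y=\eta^a$ and $t=w\eta$, subject to the single relation $xy=t^{a}$. This is precisely the $A_{a-1}$ singularity, which is a rational double point (a Du Val singularity) for $a\ge2$.

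The main point needing care is this local linearization at a fixed point that is not the origin: one must check that $\gamma$, and not merely some translate of it, acts linearly in the coordinate centred at $z_0$. The translation computation above settles this. One must also check that the stabilizer of $P$ in $T^*E$ equals its stabilizer in $E$, which holds because every $\gamma$ preserves the zero section. One can also record, although it is not needed here, the case-by-case list of stabilizer orders. For $|\Gamma|=2$ there are four points with $a=2$. For $|\Gamma|=3$ there are three points with $a=3$. For $|\Gamma|=4$ the orders are $4,4,2$. For $|\Gamma|=6$ the orders are $6,3,2$. These orders match the affine Dynkin diagrams in the table.
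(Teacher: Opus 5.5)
Your proof is correct and follows the same route as the paper: you linearize the stabilizer action at a fixed point as $(w,\eta)\mapsto(\zeta w,\zeta^{-1}\eta)$ and compute the invariant ring $\mathbb{C}[w,\eta]^{\mu_a}\cong\mathbb{C}[x,y,t]/(xy-t^{a})$, which is the $A_{a-1}$ singularity. You also supply details the paper leaves implicit: that the non-free locus is a finite set on the zero section, the explicit translation-based linearization at fixed points other than $o$, and the slice-neighbourhood reduction from $\Gamma$ to the stabilizer $\Gamma_P$.
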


\begin{proof}
At a fixed point, choose coordinates $(u,v)$ on the tangent space of $T^*E$ in which the stabilizer $\mu_a$ acts by
\[
        (u,v)\longmapsto (\zeta u,\zeta^{-1}v),
        \qquad \zeta=e^{2\pi i/a}.
\]
This is the standard cyclic subgroup of $\operatorname{SL}_2(\mbC)$ giving the Kleinian singularity of type $A_{a-1}$.  
Indeed, the invariant ring is generated by
\[
        U=u^a,\qquad V=v^a,
        \qquad W=uv,
\]
with the relation
\[
        UV=W^a.
\]
Thus the quotient is locally isomorphic to $\Spec \mbC[U,V,W]/(UV-W^a)$.
\end{proof}

For the five root types, the singularities are as following
\begin{table}[H]
\centering
\small
\renewcommand{\arraystretch}{1.2}
\begin{tabular}{c|c|c}
\textup{type} & $|\Gamma|$ & \textup{singularities of }$X_\Gamma$ \\
\hline
$\wt A_0$ & $1$ & none ($X_\Gamma=T^*E$ is smooth) \\
$\wt D_4$ & $2$ & $4A_1$ \\
$\wt E_6$ & $3$ & $3A_2$ \\
$\wt E_7$ & $4$ & $2A_3+A_1$ \\
$\wt E_8$ & $6$ & $A_5+A_2+A_1$
\end{tabular}
\caption{Singularities in the five families.}
\end{table}

Now we recall the facts about the uniqueness of crepant resolutions.

\begin{lemma}\label{lem:unique-crepant}
Let $X$ be a normal complex surface.  
If $X$ admits a crepant resolution, 
then its crepant resolution is unique up to the isomorphism over $X$.
\end{lemma}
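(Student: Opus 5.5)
The plan is to reduce uniqueness to the minimal resolution of $X$. First I recall the standard surface facts. Every normal surface $X$ has a minimal resolution $\pi_0:X_0\to X$, unique up to isomorphism over $X$. It is characterized by having no $(-1)$-curves in its exceptional locus, and every resolution $\pi:Y\to X$ factors uniquely as $Y\to X_0\to X$. Moreover, by factorization of birational morphisms of smooth surfaces, the morphism $Y\to X_0$ is a composition of blow-ups of points.

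Next, suppose $\pi:Y\to X$ is a crepant resolution, so $K_Y=\pi^*K_X$; here $K_X$ is $\mathbb{Q}$-Cartier in the sense of Mumford pullback, or simply Cartier if we take crepant in the Gorenstein sense. I claim that $Y$ contains no exceptional $(-1)$-curve. Indeed, if $C\subset Y$ is $\pi$-exceptional, then by the projection formula $K_Y\cdot C=\pi^*K_X\cdot C=0$. A $(-1)$-curve instead satisfies $K_Y\cdot C=-1$ by adjunction, since $C^2=-1$ and $p_a(C)=0$. Hence $Y$ is relatively minimal over $X$.

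Then I conclude with the factorization. If $Y\to X_0$ were not an isomorphism, it would be a nontrivial composition of point blow-ups. The exceptional curve of the last blow-up is a $(-1)$-curve on $Y$, and it is contracted by $\pi$. This contradicts the previous step. So $Y\cong X_0$ over $X$, and any two crepant resolutions are both isomorphic over $X$ to the minimal resolution.

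The only delicate point is the meaning of ``crepant'' when $X$ is not assumed Gorenstein. I would handle it with Mumford's numerical pullback: write $K_Y=\pi^*K_X+\sum a_iE_i$, and define crepant as all $a_i=0$. The intersection computation above then uses only $\pi^*K_X\cdot E_j=0$, which holds by definition of the numerical pullback. The rest is routine citation of the minimal-resolution theory, for example Barth–Hulek–Peters–Van de Ven or Kollár–Mori, Thm.~4.6 and Cor.~4.18.

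As a side remark, for the case of interest here, $X_\Gamma$ has only rational double points, so it is Gorenstein. The minimal resolution is then crepant by du Val theory, and this identifies $S_\Gamma$ with the minimal resolution.
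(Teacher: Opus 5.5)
Your proof is correct, but it reaches the minimal resolution from the opposite direction to the paper. The paper's route is:
\begin{enumerate}
\item A crepant resolution forces all discrepancies of $X$ to be $\geq 0$, by extracting divisors over $Y$ through blow-ups, so $X$ is canonical.
\item The classification of canonical surface singularities then shows $X$ is Du Val.
\item Hence the minimal resolution $\mu:X_{\min}\to X$ is itself crepant.
\item A nontrivial factor $h:Y\to X_{\min}$ is ruled out because $K_{Y/X_{\min}}$ would be a nonzero effective divisor, contradicting $K_Y=f^*K_X$.
\end{enumerate}
You never show that $X_{\min}$ is crepant. Instead you show directly that a crepant $Y$ contains no $\pi$-exceptional $(-1)$-curve, by comparing $K_Y\cdot C=\pi^*K_X\cdot C=0$ with $K_Y\cdot C=-1$. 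So $Y$ is already relatively minimal, and factorization through the minimal resolution finishes the argument.

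Both arguments rest on the same structural input: every resolution factors through $X_{\min}$, and the factor is a composition of point blow-ups.

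Your version is shorter and more elementary. It avoids the discrepancy computation over all divisors and the classification of canonical surface singularities. With Mumford's numerical pullback, it also does not need $K_X$ to be $\mathbb{Q}$-Cartier, a point the paper passes over with ``in particular $K_X$ is $\mathbb{Q}$-Cartier.''

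The paper's detour yields extra information: $X$ has only Du Val singularities and $X_{\min}$ is crepant. Uniqueness does not need this. The subsequent corollary does not really use it either, since it cites the crepancy of minimal resolutions of Du Val singularities directly, just as your side remark does.
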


\begin{proof}[Sketch of proof]
Let $f:Y\to X$ be crepant, in particular, $K_X$ is $\mathbb Q$-Cartier.  
Any divisor over the smooth surface $Y$ can be extracted by a sequence of blow-ups $g:Z\to Y$,
and we have
\[
 K_Z=(f\circ g)^*K_X+K_{Z/Y},
\]
where $K_{Z/Y}$ is effective.  
Hence all discrepancies of $X$ are nonnegative, so $X$ has canonical singularities. 
By the classification of canonical complex surface singularities,  $X$ has only Du Val singularities.
Thus we have that its minimal resolution $\mu:X_{\min}\to X$ is crepant.

Since every resolution of a normal surface factors through $X_{\min}$, we can write $f=\mu\circ h$.  
If $h:Y\to X_{\min}$ were not an isomorphism, 
then we have
\[
 K_Y=h^*K_{X_{\min}}+K_{Y/X_{\min}}
    =f^*K_X+K_{Y/X_{\min}},
\]
where $K_{Y/X_{\min}}$ is nonzero and effective, contradicting to the assumption that$K_Y=f^*K_X$.  
Thus every crepant resolution is isomorphic to $X_{\min}$ over $X$.
\end{proof}

\begin{corollary}
The morphism
\[
        \pi_\Gamma:S_\Gamma=\Gamma\text{-}\Hilb(T^*E)\longrightarrow X_\Gamma
\]
is the unique crepant resolution of $X_\Gamma$.
\end{corollary}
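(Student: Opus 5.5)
The plan is to show two things: that $\pi_\Gamma$ is a crepant resolution of $X_\Gamma$, and that uniqueness then follows from Lemma~\ref{lem:unique-crepant}. For $|\Gamma|=1$ there is nothing to prove, since $S_\Gamma=T^*E=X_\Gamma$ and $\pi_\Gamma$ is the identity. So assume $|\Gamma|>1$. The lemma on rational double points already gives that $X_\Gamma$ is normal with only $A_{a-1}$ singularities, at the images of the points of $T^*E$ with nontrivial stabilizer. Since $X_\Gamma$ is a normal surface, Lemma~\ref{lem:unique-crepant} applies as soon as we exhibit $\pi_\Gamma$ as some crepant resolution.

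First we show that $\pi_\Gamma$ is a projective birational morphism from a smooth surface. The Hilbert--Chow morphism $\Gamma\text{-}\Hilb(T^*E)\to X_\Gamma$ sends a $\Gamma$-cluster to its supporting orbit. Over $X_\Gamma^\circ$ every $\Gamma$-cluster is a reduced free orbit, so $\pi_\Gamma$ is an isomorphism there. For smoothness and crepancy we work locally over each singular point. Near a point with stabilizer $\mu_a\subset\Gamma$, a $\Gamma$-cluster supported near that orbit is induced from a $\mu_a$-cluster on a $\mu_a$-invariant neighbourhood, via $\Gamma\times_{\mu_a}(-)$. Hence $S_\Gamma$ is locally (étale or analytically) isomorphic to $\mu_a\text{-}\Hilb(\mbC^2)$, with $\mu_a\subset \operatorname{SL}_2(\mbC)$ acting diagonally as in the proof of the singularity lemma. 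By the theorem of Ito--Nakamura (equivalently, the toric description for cyclic groups), $\mu_a\text{-}\Hilb(\mbC^2)$ is the minimal resolution of the $A_{a-1}$ singularity: a chain of $a-1$ smooth $(-2)$-curves. It is therefore smooth and crepant. As an alternative route to crepancy once smoothness is known, we could check that the symplectic form $\omega_{X_\Gamma^\circ}$ pulled back along $\pi_\Gamma$ extends to a nowhere-vanishing $2$-form on $S_\Gamma$, which forces $K_{S_\Gamma}=\pi_\Gamma^*K_{X_\Gamma}=0$ locally.

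We then conclude by Lemma~\ref{lem:unique-crepant}: $\pi_\Gamma$ is a crepant resolution of the normal surface $X_\Gamma$, hence the unique one up to isomorphism over $X_\Gamma$. In particular it agrees with the minimal resolution.

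The main obstacle is the localization step. We must justify that $\Gamma\text{-}\Hilb$ of $T^*E$ near a non-free orbit is locally the $\mu_a$-Hilbert scheme of the tangent representation. This needs an equivariant slice, or étale-local linearization, at the fixed point together with the induction isomorphism for clusters. One should also verify that $S_\Gamma$ is irreducible, i.e.\ that it has no extra components beyond the closure of the free locus. I would first try to quote the Bridgeland--King--Reid theorem directly for the quasi-projective surface $T^*E$ with $\Gamma\subset\operatorname{SL}$ acting on $\omega_E$. BKR states that for $\dim\le 3$, $\Gamma\text{-}\Hilb$ is irreducible, smooth and a crepant resolution, which would settle everything at once.
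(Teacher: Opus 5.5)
Your proposal is correct and follows the paper's proof: you dispose of $|\Gamma|=1$, identify $\pi_\Gamma$ with the crepant minimal resolution via the McKay/Ito--Nakamura fact for finite subgroups of $\operatorname{SL}_2(\mbC)$, and conclude with Lemma~\ref{lem:unique-crepant}. Your localization to $\mu_a\text{-}\Hilb(\mbC^2)$, or alternatively the direct appeal to Bridgeland--King--Reid for the quasi-projective surface $T^*E$ with $\Gamma$-trivial canonical bundle, makes explicit a step the paper leaves implicit, since the paper quotes the $\operatorname{SL}_2(\mbC)$ result as though $T^*E$ were $\mbC^2$.
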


\begin{proof}
For $|\Gamma|=1$, one has $X_1=S_1=T^*E$ and $\pi_1$ is the identity.
Suppose that $|\Gamma|>1$.
For a finite subgroup of $\operatorname{SL}_2(\mbC)$, 
we have the fact that the $\Gamma$-Hilbert scheme is the minimal resolution of the quotient surface.

Since the minimal resolution of a Du Val surface singularity is crepant, $\pi_\Gamma$ is crepant.  
Therefore, Lemma \ref{lem:unique-crepant} gives uniqueness.
\end{proof}

\begin{definition}
Let $\omega_{S_\Gamma}$ be the unique regular two-form on $S_\Gamma$ whose restriction over $X_\Gamma^{\circ}$ equals $\pi_\Gamma^*\omega_{X_\Gamma^{\circ}}$.
\end{definition}

The form is non-degenerate since $\pi_\Gamma$ is crepant, thus $S_\Gamma$ is a holomorphic symplectic surface.
\subsection{Stability of the parabolic Higgs bundles}\label{sec:parabolic_form}

For the trivial group and the four quotient groups, respectively, we denote
\[
        (C,D)=
        \begin{cases}
        (E,\{o\}),&|\Gamma|=1,\\
        (E/\Gamma,\text{the reduced branch divisor of }p_\Gamma),
        &|\Gamma|>1.
        \end{cases}
\]
A parabolic bundle $V_*$ consists of a vector bundle $V$ on $C$,
and for each marked point $p\in D$, 
a flag
\[
        V_p=F_{p,0}\supset F_{p,1}\supset\cdots\supset F_{p,\ell_p}=0.
\]
We write $\ParEnd(V_*)\subset \End(V)$
for endomorphisms preserving every flag, 
and $\SParEnd(V_*)\subset \End(V)$ for strongly parabolic endomorphisms, 
i.e. for any endomorphism $f \in \SParEnd(V_*)$,
the induced fiber endomorphism $f_p\colon V_p\to V_p$ satisfies
\[
        f_p(F_{p,i})\subset F_{p,i+1}
        \qquad \text{for every }i.
\]
A parabolic Higgs field in such case is a section 
\[
        \theta\in H^0\bigl(C,\SParEnd(V_*)\otimes K_C(D)\bigr).
\]
i.e. $\theta$ is a logarithmic Higgs field whose residue at every marked point strictly lowers the parabolic flag.

We use the convention of the stability of parabolic Higgs bundles in \cite{Groechenig}. 
Let
\[
        0\leq \alpha_{p,0}<\alpha_{p,1}<\cdots
        <\alpha_{p,\ell_p-1}<1
\]
be the weights attached to
\[
        V_p=F_{p,0}\supset F_{p,1}\supset\cdots
        \supset F_{p,\ell_p}=0 .
\]
For a nonzero subbundle $F\subset V$, put
\[
        F_{*,p,i}:=F_p\cap F_{p,i},
        \qquad
        m_{p,i}(F):=
        \dim\frac{F_p\cap F_{p,i}}{F_p\cap F_{p,i+1}} .
\]
\begin{definition}
We define the parabolic degree and slope as
\[
        \pardeg(F_*)=
        \deg F+\sum_{p\in D}\sum_{i=0}^{\ell_p-1}
        \alpha_{p,i}\,m_{p,i}(F),
        \qquad
        \mu_{\mathrm{par}}(F_*)=\frac{\pardeg(F_*)}{\rk F}.
\]
A strongly parabolic Higgs bundle $(V_*,\theta)$ is stable if $\mu_{\mathrm{par}}(F_*)<\mu_{\mathrm{par}}(V_*)$
for every subbundle $0\neq F\subsetneq V$ satisfying $\theta(F)\subset F\otimes K_C(D)$.
\end{definition}

In particular, for $|\Gamma|=1$, we take $(C,D)=(E,\{o\})$ and consider rank $m$, degree $0$ strongly parabolic Higgs bundles with flag
\[
        V_o\supset\ell\supset0,
        \qquad \dim\ell=1.
\]
Assign weight $0$ to $V_o/\ell$ and weight $\varepsilon\in(0,1)$ to $\ell$.  
Then
\[
 \pardeg(V_*)=\varepsilon,\qquad
 \mu_{\mathrm{par}}(V_*)=\frac{\varepsilon}{m},
\]
while a $\theta$-invariant subbundle $0\ne F\subsetneq V$ satisfies
\[
 \pardeg(F_*)=
 \begin{cases}
  \deg F+\varepsilon,&\ell\subset F_o,\\
  \deg F,&\ell\not\subset F_o.
 \end{cases}
\]
Thus the $\wt A_0$ stability condition is
\begin{equation}\label{eq:A0-stability}
 \frac{\deg F+\varepsilon\,\mathbf{1}_{\{\ell\subset F_o\}}}{\rk F}
 <\frac{\varepsilon}{m}
\end{equation}
for every proper nonzero $\theta$-invariant subbundle $F$. 

\begin{remark}\label{rem:marked-parabolic}
The above parabolic structure can be identified with the marked structure which is introduced in \cite{GNR}, 
that is, triples $(V,\theta,v)$ in which $(V,\theta)$ is semistable of rank $m$, degree $0$ and $0\ne v\in\ell$.
The stability condition is given as the following:
\begin{equation}
 \text{There is no proper $\theta$-invariant subbundle $F\subsetneq V$ satisfies
 $\mu(F)\geq0$ and $v\in F_o$.}
\end{equation}
On the locus where $(V,\theta)$ is a sum of pairwise distinct degree $0$ Higgs line bundles, 
the condition says that $v$ has a nonzero component in every summand.
\end{remark}

For each of the four nontrivial quotient cases, we begin with the parabolic data corresponding to the surface $S_\Gamma$.  
At any torsion point $p$ of stabilizer order $a$, we write
\[
 \lambda_a(1)=
 \left(|\Gamma|,|\Gamma|-\frac{|\Gamma|}{a},
 |\Gamma|-\frac{2|\Gamma|}{a},\ldots,\frac{|\Gamma|}{a},0\right),
 \qquad
 \mathbf w_a=
 \left(0,\frac1a,\ldots,\frac{a-1}{a}\right).
\]
Here $\lambda_a(1)$ denotes the flag
\[
 V_p=F_{p,0}\supset\cdots\supset F_{p,a}=0,
 \qquad
 \dim F_{p,j}=|\Gamma|-\frac{j|\Gamma|}{a},
\]
and $\mathbf w_a$ assigns weight $j/a$ to the quotient $F_{p,j}/F_{p,j+1}$, 
which has dimension $|\Gamma|/a$.

The following quivers record these flag dimensions:
the central vertex has rank $|\Gamma|$, and each arm records the nonzero proper dimensions in the flag at one torsion point.
\begin{center}
\scriptsize
\setlength{\tabcolsep}{6pt}
\renewcommand{\arraystretch}{1.7}
\begin{tabular}{c@{\qquad}c}
\begin{tikzpicture}[
        baseline=-0.5ex,
        scale=0.72,
        every node/.style={transform shape},
        v/.style={circle,draw,inner sep=1pt,minimum size=16pt}
]
        \node[v] (c) at (0,0) {$2$};
        \node[v] (l) at (-1.4,0) {$1$};
        \node[v] (u) at (0,1.2) {$1$};
        \node[v] (d) at (0,-1.2) {$1$};
        \node[v] (r) at (1.4,0) {$1$};
        \draw (c)--(l) (c)--(u) (c)--(d) (c)--(r);
        \node[above] at (0,1.75) {$\wt D_4$};
\end{tikzpicture}
&
\begin{tikzpicture}[
        baseline=-0.5ex,
        scale=0.72,
        every node/.style={transform shape},
        v/.style={circle,draw,inner sep=1pt,minimum size=16pt}
]
        \node[v] (c) at (0,0) {$3$};
        \node[v] (ul1) at (-1.15,0.8) {$2$};
        \node[v] (ul2) at (-2.3,1.6) {$1$};
        \node[v] (dl1) at (-1.15,-0.8) {$2$};
        \node[v] (dl2) at (-2.3,-1.6) {$1$};
        \node[v] (r1) at (1.15,0) {$2$};
        \node[v] (r2) at (2.3,0) {$1$};
        \draw (c)--(ul1)--(ul2) (c)--(dl1)--(dl2) (c)--(r1)--(r2);
        \node[above] at (0,1.95) {$\wt E_6$};
\end{tikzpicture}
\\
\begin{tikzpicture}[
        baseline=-0.5ex,
        scale=0.62,
        every node/.style={transform shape},
        v/.style={circle,draw,inner sep=1pt,minimum size=16pt}
]
        \node[v] (c) at (0,0) {$4$};
        \node[v] (u) at (0,1.3) {$2$};
        \node[v] (l1) at (-1.15,0) {$3$};
        \node[v] (l2) at (-2.3,0) {$2$};
        \node[v] (l3) at (-3.45,0) {$1$};
        \node[v] (r1) at (1.15,0) {$3$};
        \node[v] (r2) at (2.3,0) {$2$};
        \node[v] (r3) at (3.45,0) {$1$};
        \draw (c)--(u) (c)--(l1)--(l2)--(l3)
              (c)--(r1)--(r2)--(r3);
        \node[above] at (0,1.9) {$\wt E_7$};
\end{tikzpicture}
&
\begin{tikzpicture}[
        baseline=-0.5ex,
        scale=0.54,
        every node/.style={transform shape},
        v/.style={circle,draw,inner sep=1pt,minimum size=16pt}
]
        \node[v] (c) at (0,0) {$6$};
        \node[v] (u) at (0,1.4) {$3$};
        \node[v] (l1) at (-1.2,0) {$4$};
        \node[v] (l2) at (-2.4,0) {$2$};
        \node[v] (r1) at (1.2,0) {$5$};
        \node[v] (r2) at (2.4,0) {$4$};
        \node[v] (r3) at (3.6,0) {$3$};
        \node[v] (r4) at (4.8,0) {$2$};
        \node[v] (r5) at (6.0,0) {$1$};
        \draw (c)--(u) (c)--(l1)--(l2)
              (c)--(r1)--(r2)--(r3)--(r4)--(r5);
        \node[above] at (0,2.0) {$\wt E_8$};
\end{tikzpicture}
\end{tabular}
\end{center}

Now, as for the four nontrivial quotient cases with parabolic data corresponding to the Hilbert scheme $S_\Gamma^{[m]}$.
We first put $N=|\Gamma|m$, for $0 \neq p \in D$, let $a_p$ be the order of its stabilizer.
For every order $a$, we write
\[
 \lambda_a(m)=
 \left(N,N-\frac Na,N-\frac{2N}{a},\ldots,\frac Na,0\right),
 \qquad
 \mathbf w_a=
 \left(0,\frac1a,\ldots,\frac{a-1}{a}\right).
\]
Here $\lambda_a(m)$ denotes the flag
\begin{equation}
 V_p=F_{p,0}\supset\cdots\supset F_{p,a}=0,
 \qquad
 \dim F_{p,j}=N-\frac{jN}{a},
\end{equation}
and $\mathbf w_a$ assigns weight $j/a$ to the quotient $F_{p,j}/F_{p,j+1}$, which has dimension $N/a$.

At $0=p_\Gamma(o)$, we denote
\[
 \lambda_{|\Gamma|}^+(m)=(N,N-m,\ldots,m,1,0),
 \qquad
 \mathbf w_{|\Gamma|}^+
 =\left(0,\frac1{|\Gamma|},\ldots,
 \frac{|\Gamma|-1}{|\Gamma|},\varepsilon\right).
\]
Here
\[
 1>\varepsilon>
 \max\left\{\frac{|\Gamma|-1}{|\Gamma|},\frac{m-1}{m}\right\}.
\]

The following quivers record the flag dimensions; the central vertex
has rank $N$, and $1_\ast$ denotes the line $\ell$.
\begin{center}
\scriptsize
\setlength{\tabcolsep}{6pt}
\renewcommand{\arraystretch}{1.7}
\begin{tabular}{c@{\qquad}c}
\begin{tikzpicture}[
        baseline=-0.5ex,
        scale=0.72,
        every node/.style={transform shape},
        v/.style={circle,draw,inner sep=1pt,minimum size=16pt}
]
        \node[v] (c) at (0,0) {$2m$};
        \node[v] (l) at (-1.4,0) {$m$};
        \node[v] (u) at (0,1.2) {$m$};
        \node[v] (d) at (0,-1.2) {$m$};
        \node[v] (r1) at (1.4,0) {$m$};
        \node[v] (r2) at (2.8,0) {$1_\ast$};
        \draw (c)--(l) (c)--(u) (c)--(d) (c)--(r1)--(r2);
        \node[above] at (0,1.75) {$\wt D_4$};
\end{tikzpicture}
&
\begin{tikzpicture}[
        baseline=-0.5ex,
        scale=0.72,
        every node/.style={transform shape},
        v/.style={circle,draw,inner sep=1pt,minimum size=16pt}
]
        \node[v] (c) at (0,0) {$3m$};
        \node[v] (ul1) at (-1.15,0.8) {$2m$};
        \node[v] (ul2) at (-2.3,1.6) {$m$};
        \node[v] (dl1) at (-1.15,-0.8) {$2m$};
        \node[v] (dl2) at (-2.3,-1.6) {$m$};
        \node[v] (r1) at (1.15,0) {$2m$};
        \node[v] (r2) at (2.3,0) {$m$};
        \node[v] (r3) at (3.45,0) {$1_\ast$};
        \draw (c)--(ul1)--(ul2) (c)--(dl1)--(dl2) (c)--(r1)--(r2)--(r3);
        \node[above] at (0,1.95) {$\wt E_6$};
\end{tikzpicture}
\\
\begin{tikzpicture}[
        baseline=-0.5ex,
        scale=0.62,
        every node/.style={transform shape},
        v/.style={circle,draw,inner sep=1pt,minimum size=16pt}
]
        \node[v] (c) at (0,0) {$4m$};
        \node[v] (u) at (0,1.3) {$2m$};
        \node[v] (l1) at (-1.15,0) {$3m$};
        \node[v] (l2) at (-2.3,0) {$2m$};
        \node[v] (l3) at (-3.45,0) {$m$};
        \node[v] (r1) at (1.15,0) {$3m$};
        \node[v] (r2) at (2.3,0) {$2m$};
        \node[v] (r3) at (3.45,0) {$m$};
        \node[v] (r4) at (4.6,0) {$1_\ast$};
        \draw (c)--(u) (c)--(l1)--(l2)--(l3) (c)--(r1)--(r2)--(r3)--(r4);
        \node[above] at (0,1.9) {$\wt E_7$};
\end{tikzpicture}
&
\begin{tikzpicture}[
        baseline=-0.5ex,
        scale=0.54,
        every node/.style={transform shape},
        v/.style={circle,draw,inner sep=1pt,minimum size=16pt}
]
        \node[v] (c) at (0,0) {$6m$};
        \node[v] (u) at (0,1.4) {$3m$};
        \node[v] (l1) at (-1.2,0) {$4m$};
        \node[v] (l2) at (-2.4,0) {$2m$};
        \node[v] (r1) at (1.2,0) {$5m$};
        \node[v] (r2) at (2.4,0) {$4m$};
        \node[v] (r3) at (3.6,0) {$3m$};
        \node[v] (r4) at (4.8,0) {$2m$};
        \node[v] (r5) at (6.0,0) {$m$};
        \node[v] (r6) at (7.2,0) {$1_\ast$};
        \draw (c)--(u) (c)--(l1)--(l2)
              (c)--(r1)--(r2)--(r3)--(r4)--(r5)--(r6);
        \node[above] at (0,2.0) {$\wt E_8$};
\end{tikzpicture}
\end{tabular}
\end{center}

With respect to those flags, weights and the stability condition, 
we give the following definition of the moduli space of stable strongly parabolic Higgs bundles.

\begin{definition}[\cite{Groechenig}]
Let $N=|\Gamma|m$,
we denote by $\mcM_{\Gamma,m}$ the moduli space of
isomorphism classes of stable strongly parabolic Higgs bundles
$(V_*,\theta)$ on $(C,D)$, where
\[
 \theta\in H^0\bigl(C,\SParEnd(V_*)\otimes K_C(D)\bigr),
\]
with the following parabolic data:

If $|\Gamma|=1$, then
\[
 (\rk V,\deg V)=(m,0),
 \qquad
 V_o\supset\ell\supset0,
 \qquad
 \dim\ell=1,
\]
and the weights are $(0,\varepsilon)$.

If $|\Gamma|>1$, then
\[
 (\rk V,\deg V)=(N,-N).
\]
For $p\in D$, let $a_p$ be the order of the stabilizer of a point over $p$.  
When $m=1$, the flag and weights at every $p\in D$ are
$(\lambda_{a_p}(1),\mathbf w_{a_p})$.  
When $m>1$, they are given as the following
\[
 (\lambda_{a_p}(m),\mathbf w_{a_p})
 \quad\text{at }p\ne0,
 \qquad
 (\lambda_{|\Gamma|}^+(m),\mathbf w_{|\Gamma|}^+)
 \quad\text{at }0=p_\Gamma(o).
\]
\end{definition}

\subsection{Symplectic form of the parabolic Higgs moduli}

Following the infinitesimal deformation theory of Hitchin pairs developed by Biswas and Ramanan \cite{BR},
we can construct the symplectic form on the moduli space.

For arbitrary $a\in\ParEnd(V_*)$, together with a strongly parabolic Higgs field $\theta\in H^0\bigl(C,\SParEnd(V_*)\otimes K_C(D)\bigr)$,
we first write $[\theta,a]:=\theta\circ a-(a\otimes 1)\circ\theta$.

\begin{definition}
Let $(V_*,\theta)$ be a stable strongly parabolic Higgs bundle on $(C,D)$.
Let us denote
\[
 \mathcal P:=\ParEnd(V_*),
 \qquad
 \mathcal Q:=\SParEnd(V_*)\otimes K_C(D).
\]
then we define its deformation complex to be the following two-term complex
\begin{equation}\label{eq:parabolic-deformation-complex}
 \mcK^\bullet_{(V_*,\theta)}
 :=
 \mathcal P\xrightarrow{[\theta,\,\cdot\,]}\mathcal Q.
\end{equation}
Indeed, the differential is well defined since the elements in
$\mathcal P$ preserves the parabolic flags, whereas $\theta$ strictly lowers them.
\end{definition}

Next, we tend to compute the hypercohomology of this complex explicitly.  
Choose distinct points $x_0,x_1\in C\setminus D$ and set
\[
        U_0:=C\setminus\{x_0\},
        \qquad
        U_1:=C\setminus\{x_1\},
        \qquad
        \mathfrak U:=\{U_0,U_1\}.
\]
The opens $U_0,U_1$, and $U_{01}:=U_0\cap U_1$ are affine.  Thus
$\mathfrak U$ is a Leray cover for the coherent sheaves $\mathcal P$
and $\mathcal Q$.  For any coherent sheaf $\mathcal F$ it has
\[
 \check C^0(\mathfrak U,\mathcal F)
 =\mathcal F(U_0)\oplus\mathcal F(U_1),
 \qquad
 \check C^1(\mathfrak U,\mathcal F)=\mathcal F(U_{01}),
 \qquad
 \check C^p(\mathfrak U,\mathcal F)=0\quad(p\geq2).
\]
Hence we have the following commutative diagram
\[
\begin{tikzcd}[column sep=large,row sep=large]
 \check C^0(\mathfrak U,\mathcal P)
 \arrow[r,"\delta"]
 \arrow[d,"d_\theta"'] &
 \check C^1(\mathfrak U,\mathcal P)
 \arrow[d,"d_\theta"] \\
 \check C^0(\mathfrak U,\mathcal Q)
 \arrow[r,"\delta"'] &
 \check C^1(\mathfrak U,\mathcal Q).
\end{tikzcd}
\]
Using the differential $D^p \coloneqq \delta+(-1)^p d_\theta$ of degree $p$, 
we obtain a sequence of \v Cech cochains
\[
 0\longrightarrow
 \check C^0(\mathfrak U,\mathcal P)
 \xrightarrow{\ D^0\ }
 \check C^1(\mathfrak U,\mathcal P)
 \oplus\check C^0(\mathfrak U,\mathcal Q)
 \xrightarrow{\ D^1\ }
 \check C^1(\mathfrak U,\mathcal Q)
 \longrightarrow0,
\]
where
\[
 \begin{aligned}
 D^0(u_0,u_1)
 &=
 \bigl(u_1-u_0,[\theta,u_0],[\theta,u_1]\bigr),\\
 D^1(\alpha_{01},\beta_0,\beta_1)
 &=
 \beta_1-\beta_0-[\theta,\alpha_{01}].
 \end{aligned}
\]
Therefore, we can write the first hypercohomology as 
\begin{equation}
 \mathbb H^1(C,\mcK^\bullet_{(V_*,\theta)})
 =
 \frac{
 \ker\!\left(
 D^1:
 \check C^1(\mathfrak U,\mathcal P)
 \oplus\check C^0(\mathfrak U,\mathcal Q)
 \longrightarrow
 \check C^1(\mathfrak U,\mathcal Q)
 \right)}
 {\operatorname{im}\!\left(
 D^0:
 \check C^0(\mathfrak U,\mathcal P)
 \longrightarrow
 \check C^1(\mathfrak U,\mathcal P)
 \oplus\check C^0(\mathfrak U,\mathcal Q)
 \right)} .
\end{equation}

\begin{theorem}[\cite{BR}]
The tangent space at $(V_*,\theta) \in \mcM$ is naturally identified with
the first hypercohomology of its deformation complex:
\[
 T_{(V_*,\theta)}\mcM
 \simeq\mathbb H^1\bigl(C,\mcK^\bullet_{(V_*,\theta)}\bigr).
\]
\end{theorem}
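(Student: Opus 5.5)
We prove the identification $T_{(V_*,\theta)}\mcM\simeq\mathbb H^1(C,\mcK^\bullet_{(V_*,\theta)})$ by the standard first-order deformation argument, run through the \v Cech description above. A tangent vector at $(V_*,\theta)$ is a morphism $\Spec\mbC[\epsilon]/(\epsilon^2)\to\mcM$ restricting to the given point. Since $(V_*,\theta)$ is stable, it is simple: its only endomorphisms are scalars. Hence $\mcM$ is, near this point, a fine moduli space up to the scalar automorphisms. We may therefore identify tangent vectors with isomorphism classes of first-order deformations $(V^\epsilon_*,\theta^\epsilon)$ over $C\times\Spec\mbC[\epsilon]$, with flags, and with $\theta^\epsilon$ strongly parabolic.

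\textbf{From a deformation to a cocycle.} Because $U_0,U_1$ are affine, every deformation of $V_*$ on $U_i$ is trivial. Concretely, $V$ restricted to $U_i$ can be lifted with its flags, since flag varieties are smooth and the parabolic locus $D$ lies inside $U_0\cap U_1$. Fix trivializations
\[
\phi_i:V^\epsilon|_{U_i}\simeq V|_{U_i}\otimes\mbC[\epsilon]
\]
compatible with the flags. Then:
\begin{itemize}[label={}]
\item The transition $\phi_1\phi_0^{-1}$ on $U_{01}$ has the form $1+\epsilon\alpha_{01}$ with $\alpha_{01}\in\mathcal P(U_{01})$.
\item In the trivializations, $\theta^\epsilon$ reads $\theta+\epsilon\beta_i$ on $U_i$. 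Here $\beta_i\in\mathcal Q(U_i)$, because strong parabolicity is preserved.
\end{itemize}
Compatibility of $\theta^\epsilon$ with the gluing is $(1+\epsilon\alpha_{01})(\theta+\epsilon\beta_0)=(\theta+\epsilon\beta_1)(1+\epsilon\alpha_{01})$. To first order this gives
\[
\beta_1-\beta_0-[\theta,\alpha_{01}]=0,
\]
up to the sign convention in $[\theta,\cdot\,]$. This is exactly $D^1(\alpha_{01},\beta_0,\beta_1)=0$.

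\textbf{Quotienting by the choices.} The only choice made is the trivializations. Changing $\phi_i$ by $1+\epsilon u_i$ with $u_i\in\mathcal P(U_i)$ alters the triple by $D^0(u_0,u_1)$. Likewise, isomorphic deformations differ by such a change. So isomorphism classes of deformations correspond to classes in $\ker D^1/\operatorname{im}D^0$, which is the \v Cech model of $\mathbb H^1$ given above.

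\textbf{The inverse map.} Conversely, any cocycle $(\alpha_{01},\beta_0,\beta_1)$ defines a deformation. Glue the trivial families $V|_{U_i}\otimes\mbC[\epsilon]$, carrying the constant flags, via $1+\epsilon\alpha_{01}$, and take $\theta+\epsilon\beta_i$ on each piece. The cocycle condition guarantees the Higgs fields agree on $U_{01}$. Stability is an open condition, so the deformed pair is again stable, and the correspondence is a bijection. Linearity with respect to the vector-space structure on $\mbC[\epsilon]$-points is routine.

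\textbf{Smoothness and the main obstacle.} It remains to show the identification is with the honest Zariski tangent space of a smooth $\mcM$. Obstructions lie in $\mathbb H^2(\mcK^\bullet)$. Parabolic Serre duality uses $\ParEnd(V_*)^\vee\simeq\SParEnd(V_*)\otimes\mcO(D)$, under which the complex $\mcK^\bullet$ is dual to itself shifted. This gives $\mathbb H^2\simeq\mathbb H^0(\mcK^\bullet)^\vee$. By stability, $\mathbb H^0$ consists of scalars only. The trace-part contribution is removed by the fixed determinant/trace conventions, as in \cite{BR}, so the deformation theory is unobstructed and the functorial identification above gives the tangent space.

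I expect this duality step to be the main obstacle. It requires carefully checking that the pairing $\operatorname{tr}:\ParEnd\otimes\SParEnd(D)\to\mcO$ is perfect at the marked points. I will verify this in a local basis adapted to the flag.
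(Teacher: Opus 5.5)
Your first-order deformation argument is correct. It is the standard Biswas--Ramanan \v Cech argument, adapted to flags. The paper gives no proof of its own: it cites \cite{BR} and only writes down the same $D^0,D^1$ model you reproduce. Your sign discrepancy is absorbed by $\alpha_{01}\mapsto-\alpha_{01}$. That part already proves the statement. At a stable, hence simple, point the stable locus is locally a free $\mathrm{PGL}$-quotient, so the Zariski tangent space of $\mcM$ is the space of first-order deformations modulo isomorphism, i.e.\ $\mathbb H^1(C,\mcK^\bullet)$. No smoothness input is needed, and in this paper smoothness is anyway known from $\mcM_{\Gamma,m}\simeq S_\Gamma^{[m]}$. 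One small correction: a deformation of $V_*$ is trivial on $U_i$ because $H^1(U_i,\ParEnd(V_*))=0$, not because flag varieties are smooth. Equivalently, surjectivity of $\End(V)(U_i)\to\bigoplus_{p\in D}\End(V_p)$ lets you absorb a moved flag into a change $1+\epsilon u_i$.

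Your final paragraph, however, fails as written. By your own duality, $\mathbb H^2(\mcK^\bullet)\simeq\mathbb H^0(\mcK^\bullet)^\vee\simeq\mbC\neq0$, and there is no fixed-determinant or trace-free convention in this paper. The complex $\mcK^\bullet$ is built from all of $\ParEnd(V_*)$, and for $|\Gamma|=1$ both $\det V=\mcO_E(\sum_i x_i-m\,o)$ and $\tr\theta=(\sum_i\xi_i)\,dz$ vary. Imposing such a convention would replace the tangent space by $\mathbb H^1(\mcK_0^\bullet)$ of the traceless complex. That loses the summand $H^1(E,\mcO_E)\oplus H^0(E,K_E)$ and contradicts the statement you are proving. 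For $C=\mathbb P^1$ the trace part adds nothing to $\mathbb H^1$, but it still contributes $H^1(\mathbb P^1,K_{\mathbb P^1})\simeq\mbC$ to $\mathbb H^2$.

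The missing idea is to split $\mcK^\bullet=[\mcO_C\xrightarrow{0}K_C]\oplus\mcK^\bullet_0$. This is possible because $\mathrm{id}\in\ParEnd(V_*)$, and strongly parabolic fields have nilpotent residues, so their trace is a holomorphic one-form. The trace of the obstruction class is the obstruction to deforming $(\det V,\tr\theta)$, which vanishes since $\Pic(C)\times H^0(C,K_C)$ is smooth. Meanwhile $\mathbb H^2(\mcK^\bullet_0)\simeq\mathbb H^0(\mcK^\bullet_0)^\vee=0$ by stability. You single out perfectness of the trace pairing at the marked points as the main obstacle, but that is a routine check in an adapted basis; the trace component is the real subtlety.
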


For the remaining part of this section, 
we will construct the symplectic form on $\mcM$ explicitly.
First note that by taking the trace, we can obtain a sheaf pairing
\begin{equation}\label{eq:parabolic-pairing}
 \mathcal P\otimes\mathcal Q\longrightarrow K_C,
 \qquad
 \langle a,b\otimes\eta\rangle_{\mathrm{tr}}=\tr(ab)\eta,
\end{equation}
and one can show that the pairing is perfect, i.e.
\[
 \mathcal P^\vee
 \simeq\SParEnd(V_*)\otimes\mathcal O_C(D),
 \qquad
 \mathcal Q\simeq\mathcal P^\vee\otimes K_C.
\]

On the other hand, the following map can be shown to be well-defined
\begin{equation}
 \pi:\mathbb H^1(C,\mcK^\bullet_{(V_*,\theta)})
 \longrightarrow H^1(C,\mathcal P), 
 \quad \pi\bigl([\alpha_{01},\beta_0,\beta_1]\bigr)
 =[\alpha_{01}].
\end{equation}
We now construct the cohomology pairing from \eqref{eq:parabolic-pairing}.

With respect to the previous Leray cover, 
we let $[\alpha_{01}]\in H^1(C,\mathcal P)$, 
and represent $\gamma\in H^0(C,\mathcal Q)$ by the \v Cech $0$-cocycle
$(\gamma_0,\gamma_1) \in \check C^0(\mathfrak U,\mathcal Q)$.
Take $\tr(\alpha_{01}\gamma_1)\in K_C(U_{01})$,
since $\check C^2(\mathfrak U,K_C)=0$, $\tr(\alpha_{01}\gamma_1)$ is a cocycle.  
If $\alpha_{01}$ is replaced by the cocycle $\alpha_{01}+s_1-s_0$, 
where $s_i\in\mathcal P(U_i)$, then we have
\[
\begin{aligned}
 \tr\bigl((\alpha_{01}+s_1-s_0)\gamma_1\bigr)
 -\tr(\alpha_{01}\gamma_1)
 &=\tr(s_1\gamma_1)-\tr(s_0\gamma_0)\\
 &=\bigl(\delta(\tr(s_i\gamma_i))\bigr)_{01}.
\end{aligned}
\]
Thus the class $[\tr(\alpha_{01}\gamma_1)]\in H^1(C,K_C)$ depends only
on $[\alpha_{01}]$ and $\gamma$, and the sheaf pairing \eqref{eq:parabolic-pairing} induces
\[
 H^1(C,\mathcal P)\otimes H^0(C,\mathcal Q)
 \longrightarrow H^1(C,K_C),\qquad
 ([\alpha_{01}],\gamma)
 \longmapsto[\tr(\alpha_{01}\gamma_1)].
\]
Composing with the trace map gives rise to the following
\[
 H^1(C,\mathcal P)\otimes H^0(C,\mathcal Q)
 \xrightarrow{\ \cup_{\mathrm{tr}}\ } H^1(C,K_C)
 \xrightarrow{\ \operatorname{Tr}_C\ } \mbC.
\]
Since $\mathcal Q\simeq\mathcal P^\vee\otimes K_C$, 
by Serre duality, this pairing is also perfect.
In this way, we finally have the following 
definition of the one-form on the moduli space $\mcM$.

\begin{definition}
Taking $\gamma=\theta$ and $[\alpha_{01}]=\pi(u)$, we define the
one-form $\Theta_{\mcM}$ by
\[
 \Theta_{\mcM,(V_*,\theta)}(u)
 =
 \operatorname{Tr}_C\bigl(\pi(u)\cup_{\mathrm{tr}}\theta\bigr).
\]
Explicitly, for $u=[\alpha_{01},\beta_0,\beta_1]$, this can be written as
\begin{equation}\label{eq:parabolic-one-form}
 \Theta_{\mcM,(V_*,\theta)}(u)
 =
 \operatorname{Tr}_C
 \bigl([\tr(\alpha_{01}\theta_1)]\bigr),
 \qquad \theta_1=\theta|_{U_1}.
\end{equation}
\end{definition}

Biswas and Mukherjee \cite[\S2.2]{BM} identify
$d\Theta_{\mcM}$ with the holomorphic symplectic form,
i.e. the two-form $\Omega_{\mcM}:=d\Theta_{\mcM}$ is nondegenerate.
Indeed, for $u=[\alpha_{01},\beta_0,\beta_1]$ and $v=[\alpha'_{01},\beta'_0,\beta'_1]$, 
we can calculate the symplectic form as
\begin{equation}
 \Omega_{\mcM}(u,v)
 =
 \operatorname{Tr}_C
 \left(
 \left[
 \tr\bigl(\alpha'_{01}\beta_0-\alpha_{01}\beta'_1\bigr)
 \right]
 \right).
\end{equation}

\section{Proof of the main theorem}

In this section, we will reach our main goal:
To compare the natural holomorphic symplectic form on the Hilbert scheme of points of $S_\Gamma$, 
and the symplectic form obtained from the deformation complex on the corresponding parabolic Higgs moduli space.

We first treat the trivial group case which first constructed in \cite{GNR}, 
and then extend such result to the four nontrivial quotient cases.

In each case, we do it by comparing the forms on an open dense reduced locus 
and extend the comparisons globally.

\subsection{The trivial-group case}

When $|\Gamma|=1$, the quotient is the elliptic curve itself,
in this case, we take the divisor $D=\{o\}$.  
Let $\mcM_{1,m}$ be the stable parabolic Higgs moduli space on $E$ with the flag
\[
        V_o\supset\ell\supset0,
        \qquad \dim\ell=1,
\]
weights $0$ and $\varepsilon$, and stability condition
\eqref{eq:A0-stability}.
The deformation complex and trace pairing construction of Section \ref{sec:parabolic_form} thus apply to $(C,D)=(E,\{o\})$.

\begin{proposition}\label{prop:trivial-group-symplectomorphism}
Under the relative Fourier--Mukai isomorphism \cite[Theorem~5.1]{Groechenig},
\[
        \Phi_{1,m}:(T^*E)^{[m]}\xrightarrow{\sim}\mcM_{1,m},
\]
one has
\[
        \Phi_{1,m}^*\Omega_{\mcM_{1,m}}
        =\omega_{(T^*E)^{[m]}}.
\]
\end{proposition}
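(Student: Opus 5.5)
Both $\Phi_{1,m}^*\Omega_{\mcM_{1,m}}$ and $\omega_{(T^*E)^{[m]}}$ are holomorphic two-forms on the smooth connected variety $(T^*E)^{[m]}$. It therefore suffices to prove that they agree on a dense open subset. I would go one step further and compare the one-forms: show that $\Phi_{1,m}^*\Theta_{\mcM_{1,m}}=\lambda^{[m]}$ on that open set, where $\lambda^{[m]}$ is the one-form induced from $\lambda_E=\xi\,dz$. Applying $d$ then gives the claim there. The open set will be the reduced spectral locus $U\subset(T^*E)^{[m]}$ of $m$ distinct points $(z_i,\xi_i)$ with pairwise distinct $z_i\neq o$. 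On $U$ the Hilbert scheme is locally the symmetric product, and $\omega_{(T^*E)^{[m]}}=\sum_i d\xi_i\wedge dz_i$.

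\textbf{Step 1: describe $\Phi_{1,m}$ on $U$.} The relative Fourier--Mukai transform of \cite[Theorem~5.1]{Groechenig} sends a point $(z,\xi)$ to the Higgs line bundle $(L_z,\xi\,dz)$. Here $L_z=\mcO_E(z-o)$ (up to sign convention) is a degree-$0$ line bundle, and the Higgs field is the constant $\xi\in H^0(K_E)$. So $\Phi_{1,m}$ sends the configuration to
\[
V=\bigoplus_i L_{z_i},\qquad \theta=\operatorname{diag}(\xi_i\,dz),
\]
with marking $v=(v_1,\dots,v_m)$ having all components nonzero, as in Remark~\ref{rem:marked-parabolic}. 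The residue of $\theta$ at $o$ vanishes, so $\theta$ is strongly parabolic. I would check this description directly from the definition of the transform on the locus where the spectral curve is a disjoint union of sections.

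\textbf{Step 2: compute the tangent vectors in \v Cech form.} Vary one point, $z_i\mapsto z_i+t\dot z_i$ and $\xi_i\mapsto\xi_i+t\dot\xi_i$. This gives a class $u=[\alpha_{01},\beta_0,\beta_1]$ of the following shape:
\begin{itemize}[label={}]
\item $\alpha_{01}$ is diagonal, with $i$-th entry a cocycle representing the Kodaira--Spencer class $\dot z_i\,\kappa\in H^1(E,\mcO)$ of $\frac{d}{dz}L_z$;
\item $\beta_0=\beta_1=\dot\xi_i\,dz\,E_{ii}$.
\end{itemize}
Here I must arrange that $\alpha_{01}$ is a section of $\ParEnd$, i.e. preserves $\ell=\mbC v$. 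Since $\alpha_{01}$ lives on $U_{01}\not\ni o$ this is automatic. Variations of the marking and the gauge change only $\alpha$ by coboundaries and off-diagonal terms, which pair trivially with the diagonal $\theta$. Normalize $\kappa$ so that $\operatorname{Tr}_C[\kappa\,dz]=1$; this normalization must be matched with the Fourier--Mukai sign convention. The definition \eqref{eq:parabolic-one-form} then gives
\[
\Theta(u)=\operatorname{Tr}_C\bigl[\tr(\alpha_{01}\theta_1)\bigr]=\sum_i \xi_i\dot z_i,
\]
which equals $\lambda^{[m]}(u)$. Taking $d$, or applying the explicit formula for $\Omega_{\mcM}$ to two such classes, yields $\sum_i(\dot\xi_i\dot z_i'-\dot\xi_i'\dot z_i)$. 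This is $\omega$ evaluated on the two tangent vectors.

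\textbf{Step 3: extend globally.} $U$ is dense in the irreducible variety $(T^*E)^{[m]}$, so the two holomorphic two-forms agree everywhere.

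The main obstacle is Step~1 together with the normalization in Step~2. I need to identify exactly the Fourier--Mukai image, including the marking, and confirm that the induced tangent map sends $\partial_{z_i}$ to the Kodaira--Spencer class with the correct constant and sign. A wrong constant would produce $c\cdot\omega$. To pin this down, I would first treat $m=1$, where $\mcM_{1,1}\simeq T^*E$ via $\operatorname{Pic}^0(E)\simeq E$, and compute the Serre duality pairing $H^1(\mcO)\otimes H^0(K)\to\mbC$ directly.
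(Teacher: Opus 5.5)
Your proposal is correct and has the same skeleton as the paper's proof. On the reduced locus, $\Phi_{1,m}(Z)=\bigoplus_i(L_{x_i},\xi_i\,dz)$ with a marking whose components are all nonzero. One shows $\Phi_{1,m}^*\Theta_{\mcM_{1,m}}$ is the tautological one-form using the Abel--Jacobi/Serre-duality identity \eqref{eq:jacobian-serre}, applies $d$, and extends by density.

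The difference is in how you control the image of a tangent vector under $d\Phi_{1,m}$. The paper first proves
$\mathbb H^1(E,\mcK^\bullet_{\mathrm{par}})\simeq\mathbb H^1(E,\mcK^\bullet_{\mathrm{ss}})\simeq\bigoplus_i\bigl(H^1(E,\mcO_E)\oplus H^0(E,K_E)\bigr)$
via the auxiliary complex $\mathcal D^\bullet$: $\epsilon_o$ is surjective, $\partial_B$ is injective, and the off-diagonal blocks are acyclic. It then reads $\Theta$ off that splitting.

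You bypass this. Because $\theta$ is diagonal and lies in $H^0(E,\End V\otimes K_E)$, $\Theta$ evaluated on a class depends only on the image of its $\pi$-component in $H^1(E,\End V)$. It then depends only on the diagonal projection of that image to $\bigoplus_i H^1(E,\mcO_E)$. For the image of a tangent vector at $Z$ under $d\Phi_{1,m}$, this projection is the tuple of Kodaira--Spencer classes of the $L_{x_i}$. Flag deformations die in $H^1(E,\End V)$, and off-diagonal entries do not meet the diagonal $\theta$ under the trace. This is a valid and shorter route to $\Phi_{1,m}^*\Theta=\lambda^{[m]}$, after which $d$ finishes the proof. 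The paper's longer comparison buys an explicit description of the whole tangent space on the reduced locus (the parabolic line contributes nothing there), which the form comparison itself does not need.

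Two small corrections.
\begin{itemize}
\item With the paper's cover ($x_0,x_1\notin D$) one has $o\in U_{01}$, so $\alpha_{01}$ being a section of $\ParEnd(V_*)$ is not automatic. You must subtract constants (coboundaries) so that the diagonal entries of $\alpha_{01}$ agree at $o$, or else use a cover with $x_0=o$.
\item The identity $\operatorname{Tr}_C[\kappa\,dz]=1$ is not a normalization at your disposal. It is exactly the content of \eqref{eq:jacobian-serre}, the standard fact that $d\mathrm{AJ}$ is Serre-dual to evaluation of holomorphic differentials. The paper takes it as given, and your proposed $m=1$ check would verify it.
\end{itemize}
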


\begin{proof}
First, we recall that how the relative Fourier--Mukai transform $\mathsf F$ 
identifies the open dense reduced locus $U_m\subset (T^*E)^{[m]}$ 
with the corresponding open locus $\mcV_m\subset\mcM_{1,m}$.
Fix a nonzero differential $dz\in H^0(E,K_E)$.  
We identify $E$ with $E^\vee=\Pic^0(E)$ by
\[
 \mathrm{AJ}(x)=L_x:=\mcO_E(x-o),
\]
and we choose the Poincar\'e bundle $\mathcal P$ normalized by $\mathcal P|_{\{o\}\times E^\vee}\simeq\mcO_{E^\vee}$.  
From the Serre-duality, we have the identification
\begin{equation}\label{eq:jacobian-serre}
 \bigl\langle d\mathrm{AJ}_x(\dot x),\eta\bigr\rangle
 =\eta_x(\dot x),
 \qquad \eta\in H^0(E,K_E).
\end{equation}
A point of $T^*E$ will be written as $a=(x,\xi dz|_x)$.  
If $\lambda_E$ is the tautological one-form, then we have
\[
 \lambda_{E,a}(\dot x,\dot\xi)=\xi\,dz|_x(\dot x),
 \qquad \omega_E=d\lambda_E.
\]

It suffices first to work on the open dense reduced locus $U_m\subset (T^*E)^{[m]}$.  
Note that a point of such locus is an unordered tuple of distinct points
\[
 a_i=(x_i,\xi_i dz),\qquad 1\leq i\leq m.
\]
The structure sheaf of the corresponding reduced subscheme is $\bigoplus_i\mcO_{a_i}$.  
Additivity of the relative Fourier--Mukai transform and the BNR correspondence (see \cite{BNR}) therefore give
\begin{equation}\label{eq:trivial-group-splitting}
 (V,\theta)=\bigoplus_{i=1}^m(L_{x_i},\xi_i dz),
 \qquad
 \theta=\diag(\xi_1dz,\ldots,\xi_mdz).
\end{equation}
We also record the marked structure appeared in the transform.  
Let
\[
 s:\mcO_{T^*E}\twoheadrightarrow\bigoplus_i\mcO_{a_i}
\]
be the canonical quotient map and write $s=(s_1,\ldots,s_m)$. 
The relative Fourier--Mukai functor $\mathsf F$ identifies
\begin{equation}
 \Hom\bigl(\mathsf F(\mcO_{T^*E}),
            \mathsf F(\textstyle\bigoplus_i\mcO_{a_i})\bigr)
 \simeq V_o.
\end{equation}
Indeed, $\mathsf F(\mcO_{T^*E})\simeq \mcO_{\{o\}\times\mbA^1}[-1]$, 
and after the BNR correspondence the left hand side becomes
\[
 \Ext_E^1(\mbC_o,V)
 \simeq\Hom_E(V,\mbC_o)^*\simeq V_o.
\]
Under this identification,
$\mathsf F(s_i)$ gives a nonzero vector $v_i\in(L_{x_i})_o$, 
since $s_i$ is nonzero and $\mathsf F$ is an equivalence.  
Thus we obtain the parabolic/marked line as
\begin{equation}
 \ell=\mbC(v_1+\cdots+v_m)\subset V_o,
 \qquad v_i\ne0.
\end{equation}
Let $\mathcal N_m^{\circ}$ be the open locus in the coarse moduli space of semistable rank $m$, 
degree $0$ Higgs bundles consisting of direct sums
\[
 \bigoplus_{i=1}^m(L_{x_i},\xi_i dz)
\]
whose rank one Higgs summands are pairwise nonisomorphic, 
and let $\mcV_m\subset\mcM_{1,m}$ be the image of $U_m$.  
Forgetting the marking defines a map $q:\mcV_m\rightarrow\mathcal N_m^{\circ}$,
this is canonically an isomorphism.  
Indeed, let $Z\in U_m$ be the unordered set of its $m$ distinct points.  For
$a=(x(a),\xi(a)dz)\in Z$, let us denote
\[
 H_a:=(L_{x(a)},\xi(a)dz),
 \qquad
 H_Z:=\bigoplus_{a\in Z}H_a.
\]
For $a,b\in Z$, we can compute the hom set as
\[
\begin{aligned}
 \Hom_{\mathrm{Higgs}}(H_a,H_b)
 &=
 \left\{
 f\in H^0\bigl(E,L_{x(b)}\otimes L_{x(a)}^{-1}\bigr):
 (\xi(b)-\xi(a))f\,dz=0
 \right\}                                          \\
 &=
 \begin{cases}
  \mbC,&a=b,\\
  0,&a\ne b.
 \end{cases}
\end{aligned}
\]
Since $Z$ consists of distinct points, we have
\[
 \Aut_{\mathrm{Higgs}}(H_Z)
 =
 (\mbC^*)^Z
 \coloneqq
 \{t:Z\to\mbC^*\},
 \qquad
 g_t=\bigoplus_{a\in Z}t(a)\,\mathrm{id}_{H_a}.
\]

We write $v_a\in(L_{x(a)})_o\setminus\{0\}$ for the vector obtained from
the quotient $\mcO_{T^*E}\twoheadrightarrow\mcO_a$.  
By Remark \ref{rem:marked-parabolic} and the above calculations,
the marked lines of $H_Z$ are exactly of the following form
\[
 \ell_c
 =
 \mbC\!\left(\sum_{a\in Z}c(a)v_a\right),
 \qquad c\in(\mbC^*)^Z,
\]
so we can see that the automorphisms act on the marked line by simply the scalar multiplications.
Therefore, one can conclude that the relative Fourier--Mukai morphism restricted to the reduced locus gives an isomorphism
\[
 \psi:U_m\xrightarrow{\sim}\mathcal N_m^\circ,
 \qquad
 \psi(Z)=[H_Z],
 \qquad
 \psi^{-1}([H_Z])=Z.
\]

Second, we will compare the parabolic and ordinary semistable Higgs deformation
theories, write
\[
 \mcK_{\mathrm{par}}^\bullet=
 [\ParEnd(V_*)\xrightarrow{[\theta,\cdot]}
  \SParEnd(V_*)\otimes K_E(o)],
 \qquad
 \mcK_{\mathrm{ss}}^\bullet=[\End(V)\xrightarrow{[\theta,\cdot]}
 \End(V)\otimes K_E].
\]
A regular $\End(V)$-valued one-form, 
regarded as a logarithmic one-form at $o$, has zero residue and is strongly parabolic,
hence we have $\End(V)\otimes K_E\subset \SParEnd(V_*)\otimes K_E(o)$.

Let $i_o:\{o\}\hookrightarrow E$ be the inclusion, 
choose a complement $W$ to $\ell$ in $V_o$, and write $V_o=\ell\oplus W$.  
With respect to this decomposition, an endomorphism of $V_o$ has the form
\[
 f_o=\begin{pmatrix}a&b\\ c&d\end{pmatrix},
 \qquad
 c\in\Hom(\ell,W).
\]
It preserves $\ell$ precisely when $c=0$.
Thus, if we set
\[
 A_o:=\Hom(\ell,V_o/\ell),
\]
then the quotient $\End(V)/\ParEnd(V_*)$ is the skyscraper sheaf $i_{o*}A_o$.

Similarly, we choose a local coordinate $t$ centered at $o$, 
then a section of $\SParEnd(V_*)\otimes K_E(o)$ can be written locally as
\[
 R(t)\frac{dt}{t}.
\]
From the strongly parabolic condition, 
its residue $R(0)$ must strictly lower the flag $V_o\supset\ell\supset0$, 
i.e. we have $R(0)(V_o)\subset\ell,\ R(0)(\ell)=0$.
Again, in the decomposition $V_o=\ell\oplus W$, this means
\[
 R(0)=\begin{pmatrix}0&b\\0&0\end{pmatrix},
 \qquad b\in\Hom(W,\ell)
       \simeq\Hom(V_o/\ell,\ell).
\]
If we write $B_o:=\Hom(V_o/\ell,\ell)$,
then from the above calculations, we can obtain two exact sequences
\begin{equation}
 \begin{gathered}
 0\longrightarrow\ParEnd(V_*)\longrightarrow\End(V)
 \longrightarrow i_{o*}A_o\longrightarrow0,\\
 0\longrightarrow\End(V)\otimes K_E
 \longrightarrow\SParEnd(V_*)\otimes K_E(o)
 \xrightarrow{\operatorname{Res}_o}i_{o*}B_o\longrightarrow0.
 \end{gathered}
\end{equation}

Now, we introduce the following ``common'' complex
\[
 \mathcal D^\bullet=
 [\End(V)\xrightarrow{[\theta,\cdot]}
  \SParEnd(V_*)\otimes K_E(o)].
\]
Define
\[
 A^\bullet:=[i_{o*}A_o\longrightarrow0], \quad B^\bullet:=[0\longrightarrow i_{o*}B_o].
\]
We thus have two short exact sequences of complexes
\begin{equation}\label{eq:comparison-of-exact-sequences}
 \begin{gathered}
 0\longrightarrow\mcK_{\mathrm{par}}^\bullet
 \longrightarrow\mathcal D^\bullet
 \longrightarrow A^\bullet\longrightarrow0,\\
 0\longrightarrow\mcK_{\mathrm{ss}}^\bullet
 \longrightarrow\mathcal D^\bullet
 \longrightarrow B^\bullet\longrightarrow0,
 \end{gathered}
\end{equation}

For the above two short exact sequences, 
we will compute the induced long exact hypercohomology sequences.  
Since $A^\bullet=[i_{o*}A_o\to0]$, 
one has $\mathbb H^0(E,A^\bullet)=A_o$ and $\mathbb H^q(E,A^\bullet)=0$ for $q>0$.  
Hence the first short exact sequence in \eqref{eq:comparison-of-exact-sequences} induces the long exact sequence as the following
\begin{equation}
 \begin{aligned}
 0\longrightarrow
 \mathbb H^0(E,\mcK_{\mathrm{par}}^\bullet)
 &\longrightarrow \mathbb H^0(E,\mathcal D^\bullet)
 \xrightarrow{\ \epsilon_o\ } A_o\\
 &\longrightarrow \mathbb H^1(E,\mcK_{\mathrm{par}}^\bullet)
 \longrightarrow \mathbb H^1(E,\mathcal D^\bullet)
 \longrightarrow0.
 \end{aligned}
\end{equation}

Since the Higgs summands are pairwise nonisomorphic, we have
\[
 \mathbb H^0(E,\mcK_{\mathrm{ss}}^\bullet)
 =\mathbb H^0(E,\mathcal D^\bullet)
 =\bigoplus_{i=1}^m\mbC\operatorname{id}_{L_{x_i}}.
\]
Under this identification, we can explicitly write the map $\epsilon_o$ as
\begin{equation}\label{eq:diagonal-automorphism-to-marking}
 (c_1,\ldots,c_m)\longmapsto
 \left[v_1+\cdots+v_m\longmapsto
       c_1v_1+\cdots+c_mv_m\right]\bmod\ell.
\end{equation}
Its kernel consists of the scalars of the form $c_1=\cdots=c_m$, and it is surjective.  
Therefore, we have
\begin{equation}\label{eq:parabolic-hypercohomology}
 \mathbb H^1(E,\mcK_{\mathrm{par}}^\bullet)
 \xrightarrow{\sim}\mathbb H^1(E,\mathcal D^\bullet).
\end{equation}
For the second sequence, since
$B^\bullet=[0\longrightarrow i_{o*}B_o]$, we have
\[
 \mathbb H^0(E,B^\bullet)=0,\qquad
 \mathbb H^1(E,B^\bullet)=B_o,\qquad
 \mathbb H^2(E,B^\bullet)=0.
\]
Thus the induced long exact sequence can be written as
\begin{equation}
 0\longrightarrow
 \mathbb H^1(E,\mcK_{\mathrm{ss}}^\bullet)
 \longrightarrow
 \mathbb H^1(E,\mathcal D^\bullet)
 \longrightarrow B_o
 \xrightarrow{\ \partial_B\ }
 \mathbb H^2(E,\mcK_{\mathrm{ss}}^\bullet).
\end{equation}
Again, we can calculate that $\partial_B$ is injective.
Thus, the long exact sequence induced by the second short exact sequence in
\eqref{eq:comparison-of-exact-sequences} now gives
\begin{equation}\label{eq:semistable-hypercohomology}
 \mathbb H^1(E,\mcK_{\mathrm{ss}}^\bullet)
 \xrightarrow{\sim}\mathbb H^1(E,\mathcal D^\bullet).
\end{equation}
Combining \eqref{eq:parabolic-hypercohomology} and
\eqref{eq:semistable-hypercohomology},
we obtain
\begin{equation}
 \mathbb H^1(E,\mcK_{\mathrm{par}}^\bullet)
 \simeq\mathbb H^1(E,\mcK_{\mathrm{ss}}^\bullet).
\end{equation}

Third, we compute the hypercohomology on the semistable side.  
Note that the endomorphism bundle decomposes as
\[
 \End(V)=\bigoplus_{i,j=1}^m M_{ij},
 \qquad M_{ij}:=L_{x_j}^{\vee}\otimes L_{x_i}.
\]
On the $(i,j)$ block, the Higgs differential is
\begin{equation}
 M_{ij}\xrightarrow{[\theta,\cdot]}M_{ij}\otimes K_E,
 \qquad u\longmapsto(\xi_i-\xi_j)u\,dz.
\end{equation}
If $i\ne j$ and $\xi_i\ne\xi_j$, this is an isomorphism of line bundles.  
If $i\ne j$ and $\xi_i=\xi_j$, distinctness forces $x_i\ne x_j$, 
so $M_{ij}$ is a nontrivial degree-zero line bundle.
In that case, we have $H^0(E,M_{ij})=0, H^1(E,M_{ij})=H^0(E,M_{ij}^{\vee})^*=0$.
Thus every off-diagonal complex has zero hypercohomology.
Hence, we have
\begin{equation}\label{eq:tangent-splitting}
 \mathbb H^1\bigl(E,\mcK_{\mathrm{par}}^\bullet\bigr)
 \simeq
 \mathbb H^1\bigl(E,\mcK_{\mathrm{ss}}^\bullet\bigr)
 \simeq\bigoplus_{i=1}^m
 \bigl(H^1(E,\mcO_E)\oplus H^0(E,K_E)\bigr).
\end{equation}

Finally, we can explicitly calculate the symplectic form.
Let $Z\in U_m$, its tangent space can be written as
\[
 T_ZU_m=\bigoplus_{a\in Z}T_a(T^*E).
\]
We write $a=(x(a),\xi(a)dz|_{x(a)}),\ u=(u_a)_{a\in Z},\ u_a=(\dot x(a),\dot\xi(a))$,
and for simplicity, let us denote $\delta_a:=d\mathrm{AJ}_{x(a)}(\dot x(a))\in H^1(E,\mcO_E)$.
Now, use \eqref{eq:jacobian-serre} and \eqref{eq:trivial-group-splitting},
together with \eqref{eq:tangent-splitting}, we obtain
\begin{align*}
 (\Phi_{1,m}^*\Theta_{\mcM_{1,m}})_Z(u)
 &=\sum_{a\in Z}\xi(a)\,
   \bigl\langle\delta_a,dz\bigr\rangle\\
 &=\sum_{a\in Z}\xi(a)\,
   dz|_{x(a)}(\dot x(a))
 =\sum_{a\in Z}\lambda_{E,a}(u_a).
\end{align*}
Therefore, for
$v=(v_a)_{a\in Z}$ with
$v_a=(\dot x'(a),\dot\xi'(a))$, we have
\begin{align}
 (\Phi_{1,m}^*\Omega_{\mcM_{1,m}})_Z(u,v)
 &=\sum_{a\in Z}(d\xi\wedge dz)_a(u_a,v_a) \notag\\
 &=\sum_{a\in Z}
 \left(
 \dot\xi(a)\,dz|_{x(a)}(\dot x'(a))
 -\dot\xi'(a)\,dz|_{x(a)}(\dot x(a))
 \right) \notag\\
 &=\sum_{a\in Z}\omega_{E,a}(u_a,v_a).
\end{align}
The last term is exactly the natural symplectic form on the reduced locus used by 
Beauville in his construction of the symplectic form on the Hilbert scheme \cite{Bea}.
Hence the two regular forms agree on the dense open set $U_m$, 
and therefore on the smooth irreducible variety $(T^*E)^{[m]}$.
\end{proof}

\subsection{The four non-trivial quotient cases}

Now we assume $|\Gamma|>1$, 
recall that we have denoted that $C=E/\Gamma\simeq\mbP^1$, and let $D\subset C$ be the branch divisor.  
If we denote the orbicurve as $\mathcal X_\Gamma=[E/\Gamma]$,
then the orbifold ($\Gamma$-equivariant)--parabolic correspondence \cite[\S2.5]{Groechenig}
identifies an orbifold Higgs bundle
\[
 (\mathcal F,\theta_{\mathcal F}),
 \qquad
 \theta_{\mathcal F}:\mathcal F\longrightarrow
 \mathcal F\otimes K_{\mathcal X_\Gamma},
\]
with a parabolic Higgs bundle
\[
 (F_*,\theta_F),
 \qquad
 \theta_F:F\longrightarrow F\otimes K_C(D),
\]
on $(C,D)$.

Indeed, let $\tau:\mathcal X_\Gamma\to C$ be the coarse moduli space morphism.  
For corresponding Higgs bundles $(\mathcal F,\theta_{\mathcal F})$ and
$(F_*,\theta_F)$, their underlying bundles satisfy the following canonical
isomorphisms in $\operatorname{Coh}(C)$:
\begin{equation}
 \begin{aligned}
 \tau_*\End(\mathcal F)
 &\simeq\ParEnd(F_*),\\
 \tau_*\bigl(\End(\mathcal F)\otimes K_{\mathcal X_\Gamma}\bigr)
 &\simeq\SParEnd(F_*)\otimes K_C(D).
 \end{aligned}
\end{equation}
We write
\[
 \mathcal K_{\mathrm{par}}^\bullet(F_*,\theta_F)
 =\bigl[\ParEnd(F_*)\xrightarrow{[\theta_F,\,\cdot\,]}
         \SParEnd(F_*)\otimes K_C(D)\bigr]
\]
and
\[
 \mathcal K_{\mathrm{orb}}^\bullet(\mathcal F,\theta_{\mathcal F})
 =\bigl[\End(\mathcal F)\xrightarrow{[\theta_{\mathcal F},\,\cdot\,]}
         \End(\mathcal F)\otimes K_{\mathcal X_\Gamma}\bigr].
\]

\begin{lemma}\label{lem:orbifold-parabolic-hypercohomology}
Let $\rho:E\to\mathcal X_\Gamma=[E/\Gamma]$ be the canonical morphism.
The orbifold--parabolic correspondence induces natural isomorphisms
\begin{equation}
 \begin{aligned}
 \mathbb H^1\bigl(C,\mathcal K_{\mathrm{par}}^\bullet(F_*,\theta_F)\bigr)
 &\simeq
 \mathbb H^1\bigl(\mathcal X_\Gamma,
                   \mathcal K_{\mathrm{orb}}^\bullet(\mathcal F,\theta_{\mathcal F})\bigr)\\
 &\simeq
 \mathbb H^1\bigl(E,
                  \rho^*\mathcal K_{\mathrm{orb}}^\bullet(\mathcal F,\theta_{\mathcal F})
            \bigr)^\Gamma.
 \end{aligned}
\end{equation}
These isomorphisms are compatible with the Serre-dual pairings in the following way: 
if $u,v$ correspond to $\widetilde u,\widetilde v$ under the above isomorphisms, then we have
\begin{equation}\label{eq:orbifold-comparison}
 \langle u,v\rangle_C
 =\langle\widetilde u,\widetilde v\rangle_{\mathcal X_\Gamma}
 =\frac1{|\Gamma|}
  \langle\rho^*\widetilde u,\rho^*\widetilde v\rangle_E.
\end{equation}
\end{lemma}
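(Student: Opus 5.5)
The plan is to prove the two isomorphisms separately, and then check compatibility with the pairings at the level of \v{C}ech representatives.

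\textbf{First isomorphism.} The coarse moduli map $\tau:\mathcal X_\Gamma\to C$ is a good moduli space for a tame stack, since $\Gamma$ is finite and we are in characteristic zero. Hence $\tau_*$ is exact on quasi-coherent sheaves, and $R\tau_*=\tau_*$. Applying $\tau_*$ termwise to $\mathcal K_{\mathrm{orb}}^\bullet$ and using the two canonical isomorphisms $\tau_*\End(\mathcal F)\simeq\ParEnd(F_*)$ and $\tau_*(\End(\mathcal F)\otimes K_{\mathcal X_\Gamma})\simeq\SParEnd(F_*)\otimes K_C(D)$ gives an isomorphism of complexes $\tau_*\mathcal K_{\mathrm{orb}}^\bullet\simeq\mathcal K_{\mathrm{par}}^\bullet$. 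For this I must check that the differentials correspond, i.e.\ that $\tau_*[\theta_{\mathcal F},\cdot]=[\theta_F,\cdot]$. This holds because the correspondence sends $\theta_{\mathcal F}$ to $\theta_F$, and the canonical isomorphisms are $\tau_*\mcO$-algebra and module maps compatible with composition. Then
\[
\mathbb H^1(\mathcal X_\Gamma,\mathcal K_{\mathrm{orb}}^\bullet)=\mathbb H^1(C,R\tau_*\mathcal K_{\mathrm{orb}}^\bullet)=\mathbb H^1(C,\mathcal K_{\mathrm{par}}^\bullet).
\]
Concretely, I would pull back the Leray cover $\mathfrak U$ of $C$ to the cover $\tau^{-1}U_0,\tau^{-1}U_1$ of $\mathcal X_\Gamma$. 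Since $x_0,x_1\notin D$, these are tame stacks with affine coarse spaces, so they are cohomologically affine. The \v{C}ech complexes then coincide term by term.

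\textbf{Second isomorphism.} The map $\rho:E\to\mathcal X_\Gamma$ is a $\Gamma$-torsor, and quasi-coherent sheaves on $\mathcal X_\Gamma$ are $\Gamma$-equivariant sheaves on $E$. The pullback of the \v{C}ech cover is the $\Gamma$-stable affine cover $\{p_\Gamma^{-1}U_i\}$ of $E$. Taking $\Gamma$-invariants is exact because $|\Gamma|$ is invertible. Therefore the \v{C}ech complex of $\mathcal K_{\mathrm{orb}}^\bullet$ on $\mathcal X_\Gamma$ is the $\Gamma$-invariant part of the \v{C}ech complex of $\rho^*\mathcal K_{\mathrm{orb}}^\bullet$ on $E$, and hypercohomology commutes with invariants.

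\textbf{Compatibility of pairings.} The first equality in \eqref{eq:orbifold-comparison} is formal once two facts are in hand: the trace pairing $\End(\mathcal F)\otimes\End(\mathcal F)\otimes K_{\mathcal X_\Gamma}\to K_{\mathcal X_\Gamma}$ pushes forward under $\tau_*$ to the pairing \eqref{eq:parabolic-pairing}, and $\tau_*K_{\mathcal X_\Gamma}=K_C$ with compatible trace maps $\operatorname{Tr}_{\mathcal X_\Gamma}=\operatorname{Tr}_C\circ\tau_*$ on $H^1$. For the trace compatibility, a section of $K_{\mathcal X_\Gamma}$ on $\tau^{-1}U$ is a $\Gamma$-invariant form on $p_\Gamma^{-1}U$, which descends to a form on $U$ with at most simple poles at $D$. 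Since it is invariant it actually has no poles, because $K_{\mathcal X_\Gamma}=\tau^*K_C(D)\otimes\mcO(-\text{stacky }D)$. The residue/trace computation on $U_{01}\subset C\setminus D$ is then unchanged.

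The factor $1/|\Gamma|$ comes from the \v{C}ech formula for $\Omega$. The representative $\tr(\alpha'_{01}\beta_0-\alpha_{01}\beta'_1)$ pulled back by $\rho$ is $p_\Gamma^*$ of the corresponding form on $U_{01}$. Computing $\operatorname{Tr}$ as a sum of residues at the points $x_0$ or $x_1$, we have $\operatorname{Tr}_E(p_\Gamma^*\eta)=\deg(p_\Gamma)\operatorname{Tr}_C(\eta)$, since $p_\Gamma$ is étale of degree $|\Gamma|$ over $x_0,x_1\notin D$. The stacky trace satisfies $\operatorname{Tr}_{\mathcal X_\Gamma}=\frac{1}{|\Gamma|}\operatorname{Tr}_E\circ\rho^*$.

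The main obstacle I expect is justifying that the normalization of $\operatorname{Tr}_{\mathcal X_\Gamma}$ agrees with $\operatorname{Tr}_C$. My first attempt will be to define both traces via residue sums at the non-branch points $x_0,x_1$, where $\tau$ is an isomorphism locally. The remaining identities then become local residue computations for an étale cover.
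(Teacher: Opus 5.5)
Your proposal is correct and follows the same route as the paper. Exactness of $\tau_*$ for the tame stack gives the first isomorphism, exactness of $\Gamma$-invariants in characteristic zero gives the second, and the normalization $\operatorname{Tr}_{\mathcal X_\Gamma}=\operatorname{Tr}_C\circ\tau_*=\frac1{|\Gamma|}\operatorname{Tr}_E\circ\rho^*$ gives the pairing identity. The differences are only in presentation: you work with \v{C}ech complexes on the pulled-back covers where the paper invokes the Leray and Cartan--Leray spectral sequences, and you justify the factor $\frac1{|\Gamma|}$ by residues at the unramified points over $x_0,x_1$, which the paper merely asserts.

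One small slip: $U_{01}=C\setminus\{x_0,x_1\}$ contains $D$, so it does not lie in $C\setminus D$. This is harmless, because $\tr(\alpha'_{01}\beta_0-\alpha_{01}\beta'_1)$ is holomorphic across $D$ (its residues there are nilpotent, hence traceless), and the \v{C}ech trace only sees residues at $x_0$ or $x_1$.
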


\begin{proof}
Note that we have the identification
\[
 \tau_*\mathcal K_{\mathrm{orb}}^\bullet(\mathcal F,\theta_{\mathcal F})
 \simeq \mathcal K_{\mathrm{par}}^\bullet(F_*,\theta_F).
\]
Since $\mathcal X_\Gamma$ is tame, $R^q\tau_*=0$ for $q>0$.
By comparing their Leray spectral sequences, we can conclude that
\[
 \mathbb H^1\bigl(\mathcal X_\Gamma,
                   \mathcal K_{\mathrm{orb}}^\bullet(\mathcal F,\theta_{\mathcal F})\bigr)
 \simeq
 \mathbb H^1\bigl(C,\mathcal K_{\mathrm{par}}^\bullet(F_*,\theta_F)\bigr).
\]

For the morphism $\rho:E\to\mathcal X_\Gamma=[E/\Gamma]$, we have the Cartan--Leray spectral sequence
\[
 H^p_{}\!\left(
   \Gamma,
   \mathbb H^q\bigl(E,
      \rho^*\mathcal K_{\mathrm{orb}}^\bullet(\mathcal F,\theta_{\mathcal F})\bigr)
 \right)
 \Longrightarrow
 \mathbb H^{p+q}\bigl(\mathcal X_\Gamma,
      \mathcal K_{\mathrm{orb}}^\bullet(\mathcal F,\theta_{\mathcal F})\bigr).
\]
Since $\Gamma$ is finite and we work over $\mbC$, for every
$\Gamma$-representation $W$ the operator
\[
 W\longrightarrow W^\Gamma,
 \qquad
 w\longmapsto\frac1{|\Gamma|}
 \sum_{\gamma\in\Gamma}\gamma w,
\]
is a projection onto $W^\Gamma$,
therefore, the functor $(-)^\Gamma$ is exact.
Hence, we have
\[
 H^p(\Gamma,W)=0\qquad(p>0).
\]
The $E_2$-page of the above spectral sequence is therefore
\[
 E_2^{p,q}=
 \begin{cases}
  \mathbb H^q\bigl(E,
   \rho^*\mathcal K_{\mathrm{orb}}^\bullet
   (\mathcal F,\theta_{\mathcal F})\bigr)^\Gamma,&p=0,\\
  0,&p>0.
 \end{cases}
\]
Which implies that the spectral sequence collapses at $E_2$.  

Thus, we have the isomorphism
\[
 \mathbb H^1\bigl(\mathcal X_\Gamma,
                   \mathcal K_{\mathrm{orb}}^\bullet(\mathcal F,\theta_{\mathcal F})\bigr)
 \simeq
 \mathbb H^1\bigl(E,
      \rho^*\mathcal K_{\mathrm{orb}}^\bullet(\mathcal F,\theta_{\mathcal F})\bigr)^\Gamma.
\]

Finally, under $\tau_*K_{\mathcal X_\Gamma}\simeq K_C$, we have
\begin{equation}
 \int_{\mathcal X_\Gamma}\eta
 =\frac1{|\Gamma|}\int_E\rho^*\eta,
 \qquad
 \eta\in H^1(\mathcal X_\Gamma,K_{\mathcal X_\Gamma}),
\end{equation}
which proves \eqref{eq:orbifold-comparison}.
\end{proof}

\begin{proposition}\label{prop:surface-symplectomorphism}
The isomorphism between the surfaces
\[
 \Phi_{\Gamma,1}:S_\Gamma\xrightarrow{\sim}\mcM_{\Gamma,1}
\]
is a symplectomorphism:
\[
 \Phi_{\Gamma,1}^*\Omega_{\mcM_{\Gamma,1}}=\omega_{S_\Gamma}.
\]
\end{proposition}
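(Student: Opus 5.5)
Our plan is to reduce to the trivial-group computation of Proposition~\ref{prop:trivial-group-symplectomorphism} with $m=1$, via Lemma~\ref{lem:orbifold-parabolic-hypercohomology}, working on the open dense locus of $S_\Gamma$ lying over $X_\Gamma^{\circ}$. A point there is a free $\Gamma$-orbit $Z=\{\gamma\cdot a\}_{\gamma\in\Gamma}$ with $a=(x,\xi\,dz)\in (T^*E)^{\circ}$. Its $\Gamma$-cluster $\mcO_Z=\bigoplus_{\gamma}\mcO_{\gamma a}$ is a $\Gamma$-equivariant sheaf on $T^*E$. By the construction in \cite{Groechenig}, the relative Fourier--Mukai transform sends it to the $\Gamma$-equivariant Higgs bundle
\[
 (\mathcal F,\theta_{\mathcal F})=\bigoplus_{\gamma\in\Gamma}\bigl(L_{\gamma x},\zeta_\gamma^{-1}\xi\,dz\bigr)
\]
on $E$, which descends to $\mathcal X_\Gamma$. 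Its parabolic counterpart on $(C,D)$ is $\Phi_{\Gamma,1}(Z)$. This is the rank $|\Gamma|$ analogue of \eqref{eq:trivial-group-splitting}: over $E$ it is a direct sum of $|\Gamma|$ pairwise non-isomorphic Higgs line bundles.

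First we identify tangent spaces. By Lemma~\ref{lem:orbifold-parabolic-hypercohomology},
\[
T_{\Phi(Z)}\mcM_{\Gamma,1}\simeq \mathbb H^1\bigl(E,\rho^*\mathcal K^\bullet_{\mathrm{orb}}\bigr)^\Gamma .
\]
Here $\rho^*\mathcal K^\bullet_{\mathrm{orb}}=[\End(\mathcal F)\to\End(\mathcal F)\otimes K_E]$ is the ordinary (non-parabolic) deformation complex on $E$. We repeat the block decomposition argument from the third step of the proof of Proposition~\ref{prop:trivial-group-symplectomorphism}. Off-diagonal blocks are acyclic because the points $\gamma a$ are distinct. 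This gives
\[
\mathbb H^1(E,\rho^*\mathcal K^\bullet_{\mathrm{orb}})\simeq\bigoplus_{\gamma\in\Gamma}\bigl(H^1(E,\mcO_E)\oplus H^0(E,K_E)\bigr)\simeq\bigoplus_{\gamma}T_{\gamma a}(T^*E),
\]
where the second identification uses \eqref{eq:jacobian-serre}. The $\Gamma$-invariant part consists of tuples $(\gamma_*u)_\gamma$ for $u\in T_a(T^*E)$. This matches $T_Z S_\Gamma\simeq T_{[a]}X_\Gamma^{\circ}\simeq T_a(T^*E)$ via $q_\Gamma$. We must check that this identification is the differential of $\Phi_{\Gamma,1}$. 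We would argue this by functoriality of the Fourier--Mukai transform in families: a first-order deformation of $a$ induces the equivariant deformation of the orbit, and hence of each summand, exactly as in the $|\Gamma|=1$ case.

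Second, we compute the form. Taking $m=|\Gamma|$ points $\{\gamma a\}$ in the trivial-group computation, $\Omega$ on $E$ evaluates on these tangent vectors to
\[
\sum_{\gamma}\omega_{E,\gamma a}(\gamma_*u,\gamma_*v)=|\Gamma|\,\omega_{E,a}(u,v),
\]
using $\gamma^*\omega_E=\omega_E$. That computation only used the trace pairing and Serre duality on $E$, not the marking. By \eqref{eq:orbifold-comparison}, the pairing on $C$ is $\frac1{|\Gamma|}$ times the pairing on $E$. Hence $\Phi_{\Gamma,1}^*\Omega_{\mcM_{\Gamma,1}}=\omega_{E,a}=\pi_\Gamma^*\omega_{X_\Gamma^\circ}$ at $Z$. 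We must also check that the Biswas--Ramanan form on $C$ is the Serre-duality pairing of Lemma~\ref{lem:orbifold-parabolic-hypercohomology}. This holds because $\tau_*$ identifies the \v{C}ech representatives and the trace pairings.

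Finally, we extend. $\Phi_{\Gamma,1}^*\Omega_{\mcM_{\Gamma,1}}$ and $\omega_{S_\Gamma}$ are regular $2$-forms on the smooth irreducible surface $S_\Gamma$. They agree on the dense open set $\pi_\Gamma^{-1}(X_\Gamma^\circ)$, hence they agree everywhere.

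The main obstacle is the differential compatibility in the first step. We must identify the induced map on tangent spaces with the naive identification, including the correct normalization of the Poincaré bundle and the $\zeta_\gamma^{-1}$ twist on $\xi$. Any mismatch here would change the constant factor. We would handle it by checking equivariance on $E$ directly and invoking the $|\Gamma|=1$ computation verbatim.
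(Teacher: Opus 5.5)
Your proposal is correct and is essentially the paper's proof. You restrict to $\pi_\Gamma^{-1}(X_\Gamma^\circ)$, pull the orbifold Higgs bundle back to $E$ as $\bigoplus_{\gamma}(L_{\gamma x},\zeta_\gamma^{-1}\xi\,dz)$, and kill the off-diagonal blocks. You then identify the $\Gamma$-invariant deformations with tuples $(\gamma\cdot u)_\gamma$ and let the factor $\frac{1}{|\Gamma|}$ from \eqref{eq:orbifold-comparison} cancel the $|\Gamma|$ copies of $\omega_{E,a}(u,v)$, before extending by density.

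The only cosmetic difference is how the pairing on $E$ is evaluated. You quote the $|\Gamma|=1$ computation at the $|\Gamma|$ points $\gamma a$, whereas the paper writes $\omega_{S_\Gamma,s}$ directly as the Serre pairing $\int_E(\alpha'\cup\beta-\alpha\cup\beta')$ and uses its $\Gamma$-invariance. The differential-compatibility point you flag as the main obstacle is simply asserted in the paper as $\rho^*\widetilde U=(\gamma\cdot u_E)_\gamma$.
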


\begin{proof}
Set $S_\Gamma^\circ=\pi_\Gamma^{-1}(X_\Gamma^\circ)$ and fix $s\in S_\Gamma^\circ$.
Choose $a=(x,\xi dz|_x)\in(T^*E)^\circ$ with $q_\Gamma(a)=\pi_\Gamma(s)$.  
Note that we have the identification
\[
 T_sS_\Gamma
 \simeq
 \mathbb H^1\!\left(E,
 [\mathcal O_E\xrightarrow{0}K_E]\right)
 =
 H^1(E,\mathcal O_E)\oplus H^0(E,K_E).
\]
For $u,v\in T_sS_\Gamma$, we write the corresponding classes as 
$u_E=(\alpha,\beta),\ v_E=(\alpha',\beta')$,
where $\alpha,\alpha'\in H^1(E,\mathcal O_E),\ \beta,\beta'\in H^0(E,K_E)$.
Thus, from the identifications
$q_\Gamma^*\omega_{X_\Gamma^\circ}=\omega_E$ and
$\omega_{S_\Gamma}|_{S_\Gamma^\circ}
=(\pi_\Gamma|_{S_\Gamma^\circ})^*\omega_{X_\Gamma^\circ}$, we have
\begin{equation*}
 \omega_{S_\Gamma,s}(u,v)
 =
 \int_E\left(
  \alpha'\cup\beta-\alpha\cup\beta'
 \right)
 =:\langle u_E,v_E\rangle_E.
\end{equation*}

Next, for $(F_{s,*},\theta_s)=\Phi_{\Gamma,1}(s)$,
let $(\mathcal F_s,\theta_s)$ be the corresponding orbifold Higgs bundle.  
Its pullback to $E$ can be computed as follows
\[
 \rho^*(\mathcal F_s,\theta_s)
 \simeq
 \bigoplus_{\gamma\in\Gamma}
 (L_{\gamma x},\varphi_{\gamma a}),
 \qquad
 \varphi_{\gamma a}
 =\eta_\gamma\operatorname{id}_{L_{\gamma x}},
 \quad \eta_\gamma\in H^0(E,K_E).
\]
Let $M_{\gamma,\delta}:=\Hom(L_{\delta x},L_{\gamma x})$,
then we have
\begin{equation}\label{eq:pulled-back-decomposition}
 \rho^*\mathcal K_{\mathrm{orb}}^\bullet(\mathcal F_s)
 \simeq
 \bigoplus_{\gamma,\delta\in\Gamma}
 \left[
 M_{\gamma,\delta}
 \xrightarrow{\ (\eta_\gamma-\eta_\delta)\cdot\ }
 M_{\gamma,\delta}\otimes K_E
 \right].
\end{equation}
Since $K_E\simeq\mathcal O_E$ and the orbit $\{\gamma a:\gamma\in\Gamma\}$ is free,
for $\gamma\ne\delta$ we have
\[
 \begin{cases}
  \eta_\gamma\ne\eta_\delta
  &\Longrightarrow
  M_{\gamma,\delta}\xrightarrow{\sim}
  M_{\gamma,\delta}\otimes K_E,\\[2mm]
  \eta_\gamma=\eta_\delta
  &\Longrightarrow
  \gamma x\ne\delta x,\quad
  M_{\gamma,\delta}\not\simeq\mathcal O_E,\quad
  H^0(E,M_{\gamma,\delta})=H^1(E,M_{\gamma,\delta})=0.
 \end{cases}
\]
Hence every off-diagonal summand in \eqref{eq:pulled-back-decomposition} is acyclic.
Therefore, we have
\begin{equation}\label{eq:fiberwise-hypercohomology}
 \mathbb H^1\!\left(
 E,\rho^*\mathcal K_{\mathrm{orb}}^\bullet(\mathcal F_s)
 \right)
 \simeq
 \bigoplus_{\gamma\in\Gamma}
 \mathbb H^1\!\left(E,
 [\mathcal O_E\xrightarrow{0}K_E]\right).
\end{equation}
As a consequence, for $c_\gamma=(\alpha_\gamma,\beta_\gamma),\ c'_\gamma=(\alpha'_\gamma,\beta'_\gamma)$,
the pairing on \eqref{eq:fiberwise-hypercohomology} can be written as
\begin{equation}\label{eq:fiberwise-pairing}
 \begin{split}
 \bigl\langle(c_\gamma)_\gamma,(c'_\gamma)_\gamma\bigr\rangle_E
 &=
 \sum_{\gamma\in\Gamma}
 \int_E\left(
  \alpha'_\gamma\cup\beta_\gamma
  -\alpha_\gamma\cup\beta'_\gamma
 \right)\\
 &=\sum_{\gamma\in\Gamma}
   \langle c_\gamma,c'_\gamma\rangle_E.
 \end{split}
\end{equation}

On the other hand, from Lemma~\ref{lem:orbifold-parabolic-hypercohomology}, we have
\begin{equation*}
 \begin{aligned}
 \mathbb H^1\!\left(
 C,\mathcal K_{\mathrm{par}}^\bullet(F_{s,*})
 \right)
 &\simeq
 \left(
 \bigoplus_{\gamma\in\Gamma}
 \mathbb H^1\!\left(E,
 [\mathcal O_E\xrightarrow{0}K_E]\right)
 \right)^\Gamma\\
 &=\left\{
  (\gamma\cdot c)_{\gamma\in\Gamma}:
  c\in
  \mathbb H^1\!\left(E,
  [\mathcal O_E\xrightarrow{0}K_E]\right)
 \right\}.
 \end{aligned}
\end{equation*}

Let $U,V\in \mathbb H^1(C,\mathcal K_{\mathrm{par}}^\bullet(F_{s,*}))$
be the parabolic first-order deformations corresponding under $\Phi_{\Gamma,1}$ to $u,v$, 
and let $\widetilde U,\widetilde V$ be their orbifold ones.
Since we have
\begin{equation*}
 \rho^*\widetilde U
 =(\gamma\cdot u_E)_{\gamma\in\Gamma},
 \qquad
 \rho^*\widetilde V
 =(\gamma\cdot v_E)_{\gamma\in\Gamma},
\end{equation*}
and the action of $\Gamma$ preserves the pairing:
\[
 \langle\gamma\cdot u_E,\gamma\cdot v_E\rangle_E
 =\langle u_E,v_E\rangle_E.
\]
Therefore, by \eqref{eq:orbifold-comparison} and
\eqref{eq:fiberwise-pairing}, we have
\begin{align*}
 (\Phi_{\Gamma,1}^*\Omega_{\mcM_{\Gamma,1}})_s(u,v)
 &=\langle U,V\rangle_C\\
 &=\frac1{|\Gamma|}
   \langle\rho^*\widetilde U,\rho^*\widetilde V\rangle_E\\
 &=\frac1{|\Gamma|}\sum_{\gamma\in\Gamma}
   \langle\gamma\cdot u_E,\gamma\cdot v_E\rangle_E\\
 &=\langle u_E,v_E\rangle_E\\
 &=\omega_{S_\Gamma,s}(u,v).
\end{align*}
Thus the two regular forms agree on $S_\Gamma^\circ$, and hence on
$S_\Gamma$.
\end{proof}

Finally, we can consider cases for higher dimensional, i.e. $m\geq2$.
Let $U_m\subset S_\Gamma^{[m]}$ be the locus of reduced subschemes
\[
 Z=\{s_1,\ldots,s_m\},
 \qquad s_i\in S_\Gamma^\circ,\quad s_i\ne s_j\ (i\ne j).
\]
Note that we have the decomposition of the tangent space
\[
 T_ZS_\Gamma^{[m]}\simeq\bigoplus_{i=1}^mT_{s_i}S_\Gamma.
\]
Thus, for $u=(u_i)$ and $v=(v_i)$, 
the symplectic form (see \cite{Bea}) induced from 
the natural symplectic form of cotangent bundle satisfies
\begin{equation}
 \omega_{S_\Gamma^{[m]},Z}(u,v)
 =\sum_{i=1}^m\omega_{S_\Gamma,s_i}(u_i,v_i).
\end{equation}

\begin{proposition}
On $U_m$ one has
\begin{equation}
 \left.\Phi_{\Gamma,m}^*\Omega_{\mcM_{\Gamma,m}}\right|_{U_m}
 =\left.\omega_{S_\Gamma^{[m]}}\right|_{U_m}.
\end{equation}
\end{proposition}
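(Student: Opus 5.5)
We combine the two earlier arguments: the splitting of the trivial-group case and the orbifold comparison of Proposition~\ref{prop:surface-symplectomorphism}. Fix $Z=\{s_1,\ldots,s_m\}\in U_m$ and choose lifts $a_i=(x_i,\xi_i dz)\in(T^*E)^\circ$ with $q_\Gamma(a_i)=\pi_\Gamma(s_i)$. Since the $s_i$ are distinct points of $S_\Gamma^\circ$, the $m|\Gamma|$ points $\gamma a_i$ ($\gamma\in\Gamma$, $1\le i\le m$) are pairwise distinct in $T^*E$. By additivity of the relative Fourier--Mukai transform and the BNR correspondence, the orbifold Higgs bundle $(\mathcal F_Z,\theta_Z)$ attached to $\Phi_{\Gamma,m}(Z)$ pulls back to
\[
 \rho^*(\mathcal F_Z,\theta_Z)\simeq\bigoplus_{i=1}^m\bigoplus_{\gamma\in\Gamma}(L_{\gamma x_i},\eta_{\gamma,i}\operatorname{id}),
\]
and the marking line at $0$ is determined, as in the trivial-group case, by vectors with nonzero component in each orbit summand.

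The first and main step is to show that the extra flag step $1_\ast$ at $0$ does not change the tangent space on this locus, so that the deformation complex reduces to the unmarked orbifold complex. I would copy the comparison in Proposition~\ref{prop:trivial-group-symplectomorphism}. Set $\mathcal K_{\mathrm{par}}^\bullet$ for the complex with the full flag $\lambda^+_{|\Gamma|}(m)$, and let $\mathcal K'^\bullet$ be the complex for the coarser flag $\lambda_{|\Gamma|}(m)$ at $0$, which is $\tau_*$ of the orbifold complex. Introduce the common complex $[\ParEnd'\to\SParEnd^+\otimes K_C(D)]$. This gives two short exact sequences with skyscraper quotients at $0$, namely $A_0=\Hom(\ell,F_{0,|\Gamma|-1}/\ell)$ and the corresponding residue quotient $B_0$.

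The connecting maps must then be checked. The map $\mathbb H^0\to A_0$ is the analogue of \eqref{eq:diagonal-automorphism-to-marking}. Here $\mathbb H^0$ of the orbifold complex is $(\mbC^{m})$, scalars on each of the $m$ free orbits, because the acyclicity argument of Proposition~\ref{prop:surface-symplectomorphism} kills all off-diagonal blocks, including those between different orbits. This map is surjective onto the $(m-1)$-dimensional space $A_0$ with kernel the global scalars. Dually, $\partial_B$ is injective, by the Serre-duality-transpose of the same computation. This gives
\[
 \mathbb H^1(C,\mathcal K_{\mathrm{par}}^\bullet)\simeq\mathbb H^1(C,\mathcal K'^\bullet).
\]
A further point needs checking: the trace pairings agree, because the inclusions are compatible with the trace form. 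The $B$-part pairs only with the $A$-part, and both die in $\mathbb H^1$. I expect this compatibility and the rank bookkeeping at $0$ to be the main obstacle.

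Next, apply Lemma~\ref{lem:orbifold-parabolic-hypercohomology} to $\mathcal K'^\bullet$. All off-diagonal blocks are acyclic, so
\[
 \mathbb H^1\simeq\Bigl(\bigoplus_{i,\gamma}\mathbb H^1(E,[\mcO_E\xrightarrow0K_E])\Bigr)^\Gamma=\bigoplus_{i=1}^m\{(\gamma\cdot c_i)_\gamma\}.
\]
Under $\Phi_{\Gamma,m}$ this matches $T_Z U_m=\bigoplus_i T_{s_i}S_\Gamma$ summand by summand, since the transform is additive on the direct sum of skyscrapers.

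Finally, \eqref{eq:orbifold-comparison} together with the block-diagonal form of the pairing, as in \eqref{eq:fiberwise-pairing}, gives
\[
 \Phi_{\Gamma,m}^*\Omega(u,v)=\sum_i\frac1{|\Gamma|}\sum_\gamma\langle\gamma u_{i,E},\gamma v_{i,E}\rangle_E=\sum_i\omega_{S_\Gamma,s_i}(u_i,v_i).
\]
The last equality holds by Proposition~\ref{prop:surface-symplectomorphism}, and by Beauville's formula above the right-hand side is $\omega_{S_\Gamma^{[m]},Z}(u,v)$.
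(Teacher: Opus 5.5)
Your proposal is correct and follows the paper's approach. The paper's proof only says that the result follows from calculations similar to those in Propositions~\ref{prop:surface-symplectomorphism} and~\ref{prop:trivial-group-symplectomorphism}, and you carry out exactly that combination: the $A_0$/$B_0$ comparison removes the extra line $\ell$ at $0$, and the orbifold pull-back then gives a block-diagonal pairing on $E$. The pairing compatibility you flag goes through as you suggest: for representatives that agree in $\mathbb H^1(\mathcal D^\bullet)$, the $\beta_i$ are automatically strongly parabolic for the coarse flag, so the only possible residue correction at $0$ (which pairs $A_0$ against $B_0$) vanishes; alternatively, comparing the one-forms $\Theta$ is immediate, since $\theta$ itself is strongly parabolic for the coarse flag.
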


\begin{proof}
 This is due to some similar calculations as in the proof of 
 Proposition \ref{prop:surface-symplectomorphism} 
 and Proposition \ref{prop:trivial-group-symplectomorphism}.
\end{proof}

Now we reach to the main theorem.

\begin{theorem}
Let $\Gamma\subset\Aut(E,o)$ be a finite cyclic group with
$|\Gamma|\in\{1,2,3,4,6\}$, and let
\[
 \Phi_{\Gamma,m}:S_\Gamma^{[m]}
 \xrightarrow{\sim}\mcM_{\Gamma,m}
\]
be the isomorphism from the Hilbert scheme of $m$ points on $S_\Gamma=\Gamma\text{-}\Hilb(T^*E)$ 
to the corresponding stable parabolic Higgs moduli space on $C_\Gamma$.  
Then we have
\[
 \Phi_{\Gamma,m}^*\Omega_{\mcM_{\Gamma,m}}
 =\omega_{S_\Gamma^{[m]}},
\]
where $\Omega_{\mcM_{\Gamma,m}}$ is the symplectic form on the moduli
space of stable parabolic Higgs bundles constructed by deformation
theory, and $\omega_{S_\Gamma^{[m]}}$ is the holomorphic symplectic
form induced by the natural symplectic form on the surface $S_\Gamma$.
\end{theorem}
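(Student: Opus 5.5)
The plan is to reduce the theorem to the comparison on a dense open set that has already been carried out, and then use that both sides are regular $2$-forms on a smooth irreducible variety. For $|\Gamma|=1$ the statement is exactly Proposition~\ref{prop:trivial-group-symplectomorphism}, so nothing more is needed. For $|\Gamma|>1$ and $m=1$ it is Proposition~\ref{prop:surface-symplectomorphism}. So the real content is the case $|\Gamma|>1$, $m\geq 2$. There the preceding proposition already gives the identity $\Phi_{\Gamma,m}^*\Omega_{\mcM_{\Gamma,m}}=\omega_{S_\Gamma^{[m]}}$ on the open locus $U_m$ of reduced subschemes supported in $S_\Gamma^\circ$.

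The key steps are then as follows. First, I check that $U_m$ is open and dense in $S_\Gamma^{[m]}$. Openness holds because $S_\Gamma^\circ=\pi_\Gamma^{-1}(X_\Gamma^\circ)$ is open in $S_\Gamma$ (its complement is the exceptional divisor together with nothing else, since $X_\Gamma^\circ$ is the complement of the finitely many singular points). Hence the locus of subschemes with support in $S_\Gamma^\circ$ is open, and within it the reduced locus is open. Density follows because $S_\Gamma$ is a smooth irreducible surface: by Fogarty, $S_\Gamma^{[m]}$ is smooth and irreducible of dimension $2m$, and its generic point is a reduced $m$-tuple of points in general position, in particular avoiding the exceptional curves.

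Second, both $\Phi_{\Gamma,m}^*\Omega_{\mcM_{\Gamma,m}}$ and $\omega_{S_\Gamma^{[m]}}$ are regular sections of $\Omega^2_{S_\Gamma^{[m]}}$. The first is the pullback of the regular form $d\Theta_{\mcM}$ along the isomorphism $\Phi_{\Gamma,m}$ of \cite{Groechenig}. The second is Beauville's form \cite{Bea}. Their difference is a section of a locally free sheaf on an integral scheme that vanishes on a dense open subset. Since a locally free sheaf on an integral scheme has no torsion sections supported on a proper closed subset, the difference vanishes identically. This gives the theorem.

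The main obstacle is not this extension argument but the input it relies on: the equality on $U_m$ for $m\geq2$, which the paper handles only by reference to "similar calculations". If I had to substantiate that step, I would argue as in Proposition~\ref{prop:trivial-group-symplectomorphism}. Pulling back to $E$, a point $Z=\{s_1,\dots,s_m\}\in U_m$ gives an orbifold Higgs bundle whose pullback is $\bigoplus_{i}\bigoplus_{\gamma}(L_{\gamma x_i},\eta_{\gamma,i})$ with $m|\Gamma|$ pairwise non-isomorphic summands. The off-diagonal blocks are acyclic exactly as in \eqref{eq:pulled-back-decomposition}. The extra line $\ell$ at $0$ is dealt with by the parabolic-versus-common-complex comparison \eqref{eq:comparison-of-exact-sequences}: there $\epsilon_o$ kills the $m$ block scalars modulo the diagonal, so $\mathbb H^1$ is unaffected. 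The pairing then splits as a sum over $i$, and by Lemma~\ref{lem:orbifold-parabolic-hypercohomology} each summand reduces to the $m=1$ computation, yielding $\sum_i\omega_{S_\Gamma,s_i}(u_i,v_i)$.
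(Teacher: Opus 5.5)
Your proposal is correct and follows the paper's proof. The case $|\Gamma|=1$ is Proposition~\ref{prop:trivial-group-symplectomorphism}. For $|\Gamma|>1$, the equality on the dense open locus $U_m$ extends to all of $S_\Gamma^{[m]}$, because both forms are regular sections of a locally free sheaf on a smooth irreducible variety. You justify the openness and density of $U_m$ and sketch the $U_m$ comparison, where the paper only cites ``similar calculations''; in that sketch, read the $A$-side as $\epsilon_o$ being surjective with kernel the diagonal scalars, and note that the $B$-side still needs $\partial_B$ to be injective.
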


\begin{proof}
Note that $U_m$ is open dense in the smooth irreducible variety $S_\Gamma^{[m]}$,
and both $\Phi_{\Gamma,m}^*\Omega_{\mcM_{\Gamma,m}}$ and $\omega_{S_\Gamma^{[m]}}$ are regular, 
their equality on $U_m$ therefore extends uniquely to all of $S_\Gamma^{[m]}$.  
This proves the case for $|\Gamma|>1$ 
and Proposition \ref{prop:trivial-group-symplectomorphism} proves the case for $|\Gamma|=1$.
\end{proof}

\subsection{Compatibility with the Hitchin maps}\label{sec:hitchin-maps}

In this final subsection, we compare the Hitchin maps on both sides of the isomorphism $\Phi_{\Gamma,m}$.
We fix the trivialization and write
\[
 T^*E\simeq E\times\mbA^1_\xi,
 \qquad
 (x,\xi)\longleftrightarrow \xi\,dz|_x.
\]
Choose a generator $\gamma_0$ of $\Gamma$ and set $\zeta=\zeta_{\gamma_0}$. 
In the case of $|\Gamma|=1$, one can just take $\gamma_0=1$ and $\zeta=1$.
Let
\[
 p_\xi:T^*E\longrightarrow\mbA^1_\xi,
 \qquad
 p_\xi(x,\xi)=\xi,
\]
be the projection.  
Note that $p_\xi$ is $\Gamma$-equivariant with respect to the action of
$\gamma\in\Gamma$ on $\mbA^1_\xi$ by $\xi\mapsto\zeta_\gamma^{-1}\xi$.
Let
\[
 \nu_{|\Gamma|}:\mbA^1_\xi\longrightarrow\mbA^1_t,
 \qquad
 \nu_{|\Gamma|}(\xi)=t=\xi^{|\Gamma|}.
\]
The composition
\[
 f_\Gamma:=\nu_{|\Gamma|}\circ p_\xi:T^*E\longrightarrow\mbA^1_t
\]
is a regular $\Gamma$-invariant morphism.
By the universal property of the finite quotient
$q_\Gamma:T^*E\to X_\Gamma$,
there is a unique regular morphism
\[
 \overline h_\Gamma:X_\Gamma\longrightarrow\mbA^1_t
\]
satisfying
\[
 \overline h_\Gamma\circ q_\Gamma=f_\Gamma,
 \qquad
 \overline h_\Gamma\bigl(q_\Gamma(x,\xi)\bigr)=\xi^{|\Gamma|}.
\]
Since $\pi_\Gamma:S_\Gamma\to X_\Gamma$ is a morphism, the composition
\[
 h_\Gamma:=\overline h_\Gamma\circ\pi_\Gamma:
 S_\Gamma\longrightarrow\mbA^1_t
\]
is a regular morphism.

Thus we can construct the composition
\begin{equation}\label{eq:hilbert-hitchin-map}
 h_{\Gamma,m}:
 S_\Gamma^{[m]}
 \xrightarrow{\ \mathrm{H}\ }S_\Gamma^{(m)}
 \xrightarrow{\ h_\Gamma^{(m)}\ }(\mbA^1_t)^{(m)}
 \longrightarrow \mbA^m,
\end{equation}
where $\mathrm{H}$ is the Hilbert--Chow morphism.

\begin{definition}[Characteristic polynomial]
Put $N=|\Gamma|\,m$.  
For a parabolic Higgs bundle $(F_*,\theta)\in\mcM_{\Gamma,m}$, 
we define the characteristic polynomial of $\theta$ by 
\[
 \det(T\operatorname{id}_F-\theta)
 =
 T^N+b_1(\theta)T^{N-1}+\cdots+b_N(\theta),
\]
where
\begin{equation}\label{eq:coefficients}
 b_i(\theta)=(-1)^i\tr\!\left(\bigwedge^i\theta\right), \quad 1\leq i\leq N.
\end{equation}
\end{definition}

Recall that, for $|\Gamma|>1$, the integer $a_p$ denotes the stabilizer order at $p\in D$.  
Let us set $N=|\Gamma|m,\ 0=p_\Gamma(o)\in C_\Gamma$, and denote
\[
 \delta_{\Gamma,m}(j)
 =
 \begin{cases}
  1,
  &
  j=k|\Gamma|+1\text{ for }1\leq k\leq m-1,
  \\[1mm]
  0,
  &
  \text{otherwise}.
 \end{cases}
\]

If we define a line bundle by
\begin{equation}\label{eq:refined-hitchin-line-bundles}
 \mathcal L_{\Gamma,j}
 :=
 \begin{cases}
  K_E,
  &
  |\Gamma|=1,\ j=1,
  \\[1mm]
  K_E^{\otimes j}(o),
  &
  |\Gamma|=1,\ 2\leq j\leq m,
  \\[1mm]
  K_{C_\Gamma}^{\otimes j}
  \left(
   \displaystyle
   \sum_{p\in D}
   \left\lfloor
    j\left(1-\frac1{a_p}\right)
   \right\rfloor p
   +
   \delta_{\Gamma,m}(j)\,0
  \right),
  &
  |\Gamma|>1.
 \end{cases}
\end{equation}

Then, we have 
$
 b_j(\theta)\in
 H^0(C_\Gamma,\mathcal L_{\Gamma,j}),
 1\leq j\leq N.
$
Therefore, we can write the Hitchin base as following
\begin{equation}\label{eq:parabolic-hitchin-base}
 \mathcal B_{\Gamma,m}^{\mathrm{par}}
 =
 \bigoplus_{j=1}^{N}
 H^0(C_\Gamma,\mathcal L_{\Gamma,j})\simeq\mbA^m.
\end{equation}

Indeed, fix $0\ne dz\in H^0(E,K_E)$. 
For $|\Gamma|>1$ and the quotient map $p_\Gamma:E\to C_\Gamma$,
by the Riemann--Hurwitz formula, we have
$p_\Gamma^*\mathcal L_{\Gamma,|\Gamma|}\simeq K_E^{\otimes |\Gamma|}$.
Since $(dz)^{|\Gamma|}$ is $\Gamma$-invariant, 
we can find the following section
\[
 \kappa_\Gamma\in H^0(C_\Gamma,\mathcal L_{\Gamma,|\Gamma|}),
 \qquad p_\Gamma^*\kappa_\Gamma=(dz)^{|\Gamma|}.
\]
For $|\Gamma|>1$, we can calculate the degree of the line bundle as
\[
 \deg\mathcal L_{\Gamma,j}
 =
 \begin{cases}
  0,&|\Gamma|\mid j,\\
  -2,&j=1,\\
  -1,&|\Gamma|\nmid j,\ j>1.
 \end{cases}
\]
Therefore, we have
\[
 H^0(C_\Gamma,\mathcal L_{\Gamma,j})
 =
 \begin{cases}
  \mathbb C\,\kappa_\Gamma^k,
  &
  j=k|\Gamma|,\\
  0,
  &
  |\Gamma|\nmid j.
 \end{cases}
\]
Thus the isomorphism in \eqref{eq:parabolic-hitchin-base} can be written explicitly as
\begin{equation}\label{eq:parabolic-hitchin-coordinates}
 \begin{aligned}
 \mathcal B_{\Gamma,m}^{\mathrm{par}}&\xrightarrow{\ \sim\ }\mbA^m,\\
 (b_1,\ldots,b_N)&\longmapsto(c_1,\ldots,c_m),
 \qquad b_{k|\Gamma|}=c_k\kappa_\Gamma^k.
 \end{aligned}
\end{equation}
Here $b_j=0$ whenever $|\Gamma|\nmid j$.

\begin{definition}[Hitchin map]
We define the Hitchin map as
\[
 \begin{aligned}
 \operatorname{Hit}_{\Gamma,m}:\mcM_{\Gamma,m}
 &\longrightarrow\mathcal B_{\Gamma,m}^{\mathrm{par}},\\
 (F_*,\theta)&\longmapsto
 \bigl(b_1(\theta),\ldots,b_N(\theta)\bigr).
 \end{aligned}
\]
\end{definition}

\begin{theorem}[Compatibility with the Hitchin maps]
For every finite cyclic group $\Gamma\subset\Aut(E,o)$ with
$|\Gamma|\in\{1,2,3,4,6\}$, 
the isomorphism $\Phi_{\Gamma,m}$ is compatible with the Hitchin map.  
More precisely, the diagram
\[
 \begin{tikzcd}[column sep=large,row sep=large]
 S_\Gamma^{[m]}
   \arrow[r,"\Phi_{\Gamma,m}"]
   \arrow[d,"h_{\Gamma,m}"']&
 \mcM_{\Gamma,m}
   \arrow[d,"\operatorname{Hit}_{\Gamma,m}"]\\
 \mbA^m\arrow[r,equal]&\mbA^m
 \end{tikzcd}
\]
is commutative.
\end{theorem}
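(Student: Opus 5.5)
The plan is to prove commutativity on the dense open reduced locus $U_m\subset S_\Gamma^{[m]}$ used above. The general statement then follows because both compositions $\operatorname{Hit}_{\Gamma,m}\circ\Phi_{\Gamma,m}$ and $h_{\Gamma,m}$ are morphisms from the reduced irreducible variety $S_\Gamma^{[m]}$ to the separated scheme $\mbA^m$. Two morphisms agreeing on a dense open subset agree everywhere.

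\textbf{Step 1: the semisimple description on $U_m$.}
Take $Z=\{s_1,\ldots,s_m\}\in U_m$ with $s_i\in S_\Gamma^\circ$, and choose lifts $a_i=(x_i,\xi_i)\in(T^*E)^\circ$. Additivity of the relative Fourier--Mukai transform and the BNR description (as in \eqref{eq:trivial-group-splitting} and in the proof of Proposition~\ref{prop:surface-symplectomorphism}) gives the pulled-back orbifold Higgs bundle:
\[
 \rho^*(\mathcal F_Z,\theta_Z)\simeq\bigoplus_{i=1}^m\bigoplus_{\gamma\in\Gamma}\bigl(L_{\gamma x_i},\,\zeta_\gamma^{-1}\xi_i\,dz\bigr).
\]
Hence, on $E\setminus p_\Gamma^{-1}(D)$, the characteristic polynomial of $p_\Gamma^*\theta$ is
\[
 \prod_{i=1}^m\prod_{\gamma\in\Gamma}\bigl(T-\zeta_\gamma^{-1}\xi_i\,dz\bigr)=\prod_{i=1}^m\bigl(T^{|\Gamma|}-\xi_i^{|\Gamma|}(dz)^{|\Gamma|}\bigr),
\]
using $\prod_{\gamma}(T-\zeta_\gamma^{-1}w)=T^{|\Gamma|}-w^{|\Gamma|}$ for the cyclic group of order $|\Gamma|$. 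Away from $D$, $p_\Gamma$ is étale and the parabolic bundle agrees with the invariant pushforward, so $p_\Gamma^*b_j(\theta)=b_j(p_\Gamma^*\theta)$ there. Therefore $p_\Gamma^*b_{k|\Gamma|}(\theta)=(-1)^k e_k(\xi_1^{|\Gamma|},\ldots,\xi_m^{|\Gamma|})\,(dz)^{k|\Gamma|}$, and $b_j=0$ for $|\Gamma|\nmid j$.

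\textbf{Step 2: conversion to coordinates.}
The sections $b_j$ are regular sections of $\mathcal L_{\Gamma,j}$, and $p_\Gamma^*\kappa_\Gamma^k=(dz)^{k|\Gamma|}$. Equality on the dense open set $C\setminus D$ therefore forces $b_{k|\Gamma|}=(-1)^ke_k(t_1,\ldots,t_m)\,\kappa_\Gamma^k$ with $t_i=\xi_i^{|\Gamma|}=h_\Gamma(s_i)$. By \eqref{eq:parabolic-hitchin-coordinates}, the image in $\mbA^m$ is $c_k=(-1)^ke_k(t_1,\ldots,t_m)$.

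On the other side, $h_{\Gamma,m}(Z)$ is the point $\{t_1,\ldots,t_m\}\in(\mbA^1_t)^{(m)}$. I fix the identification $(\mbA^1_t)^{(m)}\simeq\mbA^m$ in \eqref{eq:hilbert-hitchin-map} as the coefficients of $\prod_i(T-t_i)$, which matches exactly. The case $|\Gamma|=1$ is the same computation with $\kappa_1=dz$ and $\Gamma$ trivial, directly from \eqref{eq:trivial-group-splitting}.

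\textbf{Main obstacle.}
The delicate step is Step 1's claim that the characteristic polynomial of the parabolic $\theta$ on $C$ pulls back to that of the orbifold Higgs field on $E$. The correspondence changes the bundle near $D$ by Hecke modifications, and the residues there interact with the line bundles $\mathcal L_{\Gamma,j}$ and the extra twist $\delta_{\Gamma,m}$. I will sidestep this by comparing only over $C\setminus D$, where $\tau$ is an isomorphism of $\mathcal X_\Gamma$ onto its coarse space. Both sides are sections of the fixed line bundle $\mathcal L_{\Gamma,j}$, whose regularity is asserted above, so agreement off $D$ suffices. One must also check that $\Phi_{\Gamma,m}$ uses the same normalization of the trivialization $dz$ as $h_\Gamma$, which is built into the BNR identification of $\theta$ with multiplication by $\xi\,dz$.
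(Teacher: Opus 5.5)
Your proposal is correct and follows essentially the same route as the paper: restrict to the reduced locus $U_m$, use additivity to write the Higgs field as a direct sum over the points $s_i$, compute the characteristic polynomial as $\prod_i\bigl(T^{|\Gamma|}-t_i\kappa_\Gamma\bigr)$, match its coefficients with the $c_k$ from $\prod_i(Y-t_i)$, and extend by density. The only cosmetic differences are that you compute on the pullback to $E$ of the whole orbifold bundle rather than summing the $\Phi_{\Gamma,1}(s_i)$ on $C$, and that you spell out the comparison over $C\setminus D$. The paper leaves that comparison implicit, using injectivity of $p_\Gamma^*$ on sections of $\mathcal L_{\Gamma,j}$.
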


\begin{proof}
It suffices to compare the two morphisms on the dense open subset $U_m$.
Let
\[
 Z=\{s_1,\ldots,s_m\}\in U_m,
 \qquad
 a_i=(x_i,\xi_i\,dz|_{x_i})\in(T^*E)^\circ,
 \qquad q_\Gamma(a_i)=\pi_\Gamma(s_i),
\]
and put $t_i=h_\Gamma(s_i)=\xi_i^{|\Gamma|}$.  Write
\begin{equation}\label{eq:product-coordinates}
 \prod_{i=1}^m(Y-t_i)
 =Y^m+c_1Y^{m-1}+\cdots+c_m.
\end{equation}
By \eqref{eq:hilbert-hitchin-map},
$h_{\Gamma,m}(Z)=(c_1,\ldots,c_m)$.

Let $(F_{i,*},\theta_i)=\Phi_{\Gamma,1}(s_i)$,
we have the following 
\[
 \begin{aligned}
 p_\Gamma^*\det(T\operatorname{id}_{F_i}-\theta_i)
 &=\prod_{\ell=0}^{|\Gamma|-1}
   (T-\zeta^{-\ell}\xi_i\,dz)\\
 &=T^{|\Gamma|}-t_i(dz)^{|\Gamma|}
 =p_\Gamma^*(T^{|\Gamma|}-t_i\kappa_\Gamma).
 \end{aligned}
\]
Therefore, we have
\[
 \det(T\operatorname{id}_{F_i}-\theta_i)
 =T^{|\Gamma|}-t_i\kappa_\Gamma.
\]
On $U_m$, additivity of the relative Fourier--Mukai transform gives, 
after forgetting the parabolic flags,
\[
 (F_Z,\theta_Z)=\bigoplus_{i=1}^m(F_i,\theta_i),
 \qquad (F_{Z,*},\theta_Z)=\Phi_{\Gamma,m}(Z).
\]
Consequently, using \eqref{eq:product-coordinates}, we have
\[
 \begin{aligned}
 \det(T\operatorname{id}_{F_Z}-\theta_Z)
 &=\prod_{i=1}^m(T^{|\Gamma|}-t_i\kappa_\Gamma)\\
 &=T^{|\Gamma|m}+c_1\kappa_\Gamma T^{|\Gamma|(m-1)}
   +\cdots+c_m\kappa_\Gamma^m.
 \end{aligned}
\]
By \eqref{eq:parabolic-hitchin-coordinates}, we have the following identification
\[
 \operatorname{Hit}_{\Gamma,m}(\Phi_{\Gamma,m}(Z))
 =(c_1,\ldots,c_m)=h_{\Gamma,m}(Z).
\]
Both sides are regular morphisms, 
and $U_m$ is dense in the irreducible variety $S_\Gamma^{[m]}$, 
hence they agree everywhere.
\end{proof}



\end{document}